\newcommand*{\QEDB}{\hfill\ensuremath{\square}}%
\newtheorem{theorem}{Theorem}[section]
\newtheorem{lemma}[theorem]{Lemma}
\newtheorem{proposition}[theorem]{Proposition}
\newtheorem{corollary}[theorem]{Corollary}
\newtheorem{Hypotheses}{Hypotheses}
\theoremstyle{definition}
\newtheorem*{Proof}{Proof}
\numberwithin{equation}{section}
\renewcommand{\div}{\mathrm{div}\,}    
\newcommand\restr[2]{{
  \left.\kern-\nulldelimiterspace 
  #1 
  \vphantom{\big|} 
  \right|_{#2} 
  }}
\title[Foldy-Lax approximation]{The Foldy-Lax approximation is valid for nearly resonating frequencies}
\author[Alsenafi, Ghandriche, Sini]{Abdulaziz Alsenafi$^{*}$, Ahcene Ghandriche $^{**} $, Mourad Sini$^{\ddag} $}
\date{\today}
\subjclass[2010]{35R30, 35C20}
\keywords{Foldy-Lax approximation, Multiple scattering, Resonances.}
\thanks{$^*$ Department of Mathematics, Faculty of Science, Kuwait University, P.O. Box 5969, Safat 13060, Kuwait. Email: abdulaziz.alsenafi@ku.edu.kw}
\thanks{$^{**}$ RICAM, Austrian Academy of Sciences, Altenbergerstrasse 69, A-4040, Linz, Austria. Email: ahcene.ghandriche@ricam.oeaw.ac.at.}
\thanks{$^{\ddag}$ RICAM, Austrian Academy of Sciences, Altenbergerstrasse 69, A-4040, Linz, Austria. Email: mourad.sini@oeaw.ac.at. }
\begin{document}
\maketitle

\begin{abstract} 
 
 The waves (including acoustic, electromagnetic and elastic ones) propagating in the presence of a cluster of inhomogeneities undergo multiple interactions between them. When these inhomogeneities have sub-wavelength sizes, the dominating field due to the these multiple interactions is the Foldy-Lax field. This field models the interaction between the equivalent point-like scatterers, located at the centers of the small inhomogeneities, with scattering coefficients related to geometrical/material properties of each inhomogeneities, as polarization coefficients. One of the related questions left open for a long time is whether we can reconstruct this Foldy-Lax field from the scattered field measured far away from the cluster of the small inhomogeneities. This is the Foldy-Lax approximation (or Foldy-Lax paradigm). In this work, we show that this approximation is indeed valid as soon as the inhomogeneities enjoy critical scales between their sizes and contrasts. These critical scales allow them to generate resonances which can be characterized and computed. The main result here is that exciting the cluster by incident frequencies which are close to the real parts of these resonances allow us to reconstruct the Fold-Lax field from the scattered waves collected far away from the cluster itself (as the farfields). In short, we show that the Foldy-Lax approximation is valid using nearly resonating incident frequencies. This results is demonstrated by using, as small inhomogeneities, dielectric nanoparticles for the $2$D TM model of electromagnetic waves and bubbles for the $3$D acoustic waves.             
\end{abstract}

\section{Introduction}

We are interested in the following model
\begin{equation}\label{sca-proble}
\begin{cases}
\nabla \cdot \left( a \nabla u \right) +k^2 b\; u & =0, \mbox{ in } \mathbb{R}^{m}, \,\, m=2 \,\, \text{or} \,\, 3, \\

u(x, \theta) & := u^i(x, \theta) +u^s(x, \theta)\\

\displaystyle\frac{\partial u^{s}}{\partial |x|}-i\; k \sqrt{\frac{b}{a}}\; u^{s}& =o\left(\displaystyle\frac{1}{|x|^{\frac{m-1}{2}}}\right), |x|\rightarrow\infty.
\end{cases}
\end{equation}
where 
\begin{equation}
a:=\begin{cases}
a_j \mbox{ in } D_j,\\
a_0 \mbox{ in } \mathbb{R}^{m} \setminus \underset{j=1}{\overset{M}{\cup}} D_{j}, 
\end{cases}
\quad \text{and} \quad
b:=\begin{cases}
b_j \mbox{ in } D_j, \\
b_0 \mbox{ in } \mathbb{R}^{m}\setminus \underset{j=1}{\overset{M}{\cup}} D_{j}, 
\end{cases}
\end{equation}
with constant values $a_j$ and $b_j$, $j=0, 1, ..., M$. Here $u^{i}$ is an incident wave, solution to $\Delta u^i+k^2\frac{b_0}{a_0}u^i=0$ in the whole space $\mathbb{R}^m$. Mostly, we deal with plane waves of the form $u^i:=u^i(x, \theta):=e^{i\; k \sqrt{\frac{b_0}{a_0}}\theta\cdot x}$ with an incident direction $\theta, \; \vert \theta\vert=1$. In dimension $m=2$, this model is related to the TM model in electromagnetism where $u(\cdot)$ is the third component of the electric field while $a_j$'s are the inverses of the magnetic permeability constants and $b_j$'s are the electric permittivity constants. We can also see $u(\cdot)$ as the third component of the magnetic field where, now, $b_j$'s are the inverses of the magnetic permeability constants and $a_j$'s are the electric permittivity constants. The inhomogeneities $D_j$'s model here nano-particles. In dimension $m=3$, this model is related to the propagation of acoustic wave $u(\cdot)$ in the presence of micro bubbles $D_j$'s. Here the constants $a_j$'s are the inverses of the mass density and the constants $b_j$'s are the inverses of the bulk moduli of the bubbles, \cite{Papanicoulaou-1, Papanicoulaou-2}. 
\bigskip

The inhomogeneities $D_j$'s are small scaled (i.e. micro scaled for the bubbles and nano scaled for the nano-particles). We take them of the form $D_j:=z_j +\delta\; B_j$, $j=1, ..., M$, where the $B_j$'s are Lipschitz smooth and bounded domains containing the origin (of $\mathbb{R}^m$), $z_j$'s model their location and $\delta$ their relative radius which we take to be small, i.e. $\delta=o\left( \underset{j=1,\cdots, M}{\max}\{\Vert B_j \Vert\} \right)$. We set $D:= \overset{M}{\underset{j=1}{\cup}} D_{j}$. 
Let $u(\cdot) = u^s(\cdot) + u^i(\cdot)$ be the solution of the acoustic scattering problem (\ref{sca-proble}).
 From the Lippmann-Schwinger representation of the total acoustic field $u(\cdot)$, we have
\begin{equation}
\label{eq:1}
 u(x) - \alpha \; \underset{x}{\div}  \int_D \Phi_{k}(x,y) \nabla u(y) dy -\beta k^2 \int_D \Phi_{k}(x,y)  u(y) dy = u^i(x),
\end{equation}
where $\alpha := a_1 - a_0$ and $\beta := b_1 - b_0$ represent the contrasts  between the inner and the outer acoustic coefficients. Here, $\Phi_{k}(\cdot,\cdot)$ is the Green's function of the background medium $(a_0, b_0)$ satisfying the outgoing Sommerfeld radiation conditions at infinity. This is an integro-differential equation. To transform it to a solely integral equation, we proceed by integration by parts, then \eqref{eq:1} becomes: 
\begin{equation*}
\label{eq:ReworkLippmannSchwinger}
  u(x) -  \gamma k^2  \int_D \Phi_{k}(x,y)u(y) dy + \alpha \int_{\partial D} \Phi_{k}(x,y) \frac{\partial u}{\partial \nu} (y) dy = u^i(x) ,
\end{equation*}
where $\gamma := \beta - \alpha b_1/a_1$, for $x \in D$. In addition, taking the normal derivative and trace, with the usual traces of the double layer potential, we obtain:
\begin{equation*}
\label{eq:ReworkLippmannSchwinger2}
\left( 1 + \frac{\alpha}{2} \right) \frac{\partial u}{\partial \nu} - \gamma k^2   \partial_{\nu-} \int_D \Phi_{k}(x,y)u(y) dy + \alpha( K_D^{k})^* \left[ \frac{\partial u}{\partial \nu} \right] = \frac{\partial u^i}{\partial \nu}.
\end{equation*}

Hence for $x\in \mathbb{R}^3\setminus \overline{D}$, the total acoustic field $u(x)$ is characterized by $u_{|_D} $ and $\frac{\partial u}{\partial \nu}\big|_{\partial D}$,  which are solutions of the following close form system of integral equations:
\bigskip
\begin{eqnarray}
\left[ I -  \gamma k^2 A^k \right] u + \alpha \int_{\partial D} \Phi_{k}(x,y) \frac{\partial u}{\partial \nu} (y) dy &=& u^i(x), \; \mbox{in} \,\, D,  \label{eq:ReworkLippmannSchwinger-1-} \\
\left[ \frac{1}{\alpha} +\frac{1}{2} + ( K_D^{k})^{*} \right] \left[ \frac{\partial u}{\partial \nu} \right] - \frac{\gamma}{\alpha} k^2   \partial_{\nu-} \int_D \Phi_{k}(x,y)u(y) dy & = & \frac{1}{\alpha}\frac{\partial u^i}{\partial \nu}, \; \mbox{on} \,\, \partial D, 
\label{eq:ReworkLippmannSchwinger2-}
\end{eqnarray} 

with the Newtonian (a volume-type) operator:
$$
A^{k} : \mathbb{L}^2(D)\longrightarrow \mathbb{L}^2(D),~~~ A^k(u)(x):=\int_D \Phi_{k}(x,y)u(y) dy,
$$
with image of $A^{k}$ in $\mathbb{H}^2(D)$, and the Neumann-Poincar\'e (a surface-type) operator\footnote{The notation $p.v$ means the Cauchy principal value.}
$$
(K_D^{k})^*: \mathbb{H}^{-1/2}(\partial D) \longrightarrow \mathbb{H}^{-1/2}(\partial D),~~~ (K_D^{k})^*(f)(x):=p.v.\int_{\partial D} \frac{\partial }{\partial \nu(x)} \Phi_{k}(x,y) f (y) d\sigma(y).
$$
We recall that for $k=0$, each of these operators generates a sequence of eigenvalues: $\lambda_m(A^0)\stackrel{m\rightarrow \infty}{\longrightarrow} 0$ and $\sigma_p((K_D^{0})^*)\subset [ -\frac{1}{2}, \frac{1}{2})$. In addition, we have $K^0_D(1)= - \, \frac{1}{2}$. These singular values are behind the resonances that we want to use. 
\bigskip

We consider the coefficients $a$ and $b$ satisfying the following scales in terms of $\delta$. We provide the following situations where the constants $a_0$ and $b_0$ in the background are assumed to be independent (or uniform in terms of) of $\delta$.

\begin{enumerate}
\item[]
\item First situation. 
\begin{equation}
\label{First}
a:= \begin{cases}
a_j\sim \delta^{-2} \quad \text{inside } D_j,\\
a_0 \quad \text{outside } \overset{M}{\underset{j=1}{\cup}} D_j , \end{cases}
\text{and} \qquad
b:= \begin{cases}
b_j \sim \delta^{-2} \quad \text{inside } D_j,\\
b_0 \quad \text{outside } \overset{M}{\underset{j=1}{\cup}} D_j. \end{cases}\end{equation}
In this case, we have $\gamma \sim 1$ and then $\gamma \, k^2 \, A^{k} \ll1$ as $a\ll1$. Hence, there is no singularity coming from (\ref{eq:ReworkLippmannSchwinger-1-}). But as $\alpha \gg1$, precisely if $\alpha \sim a^{-2}$ as $a\ll1$, then we can excite the eigenvalue $-\frac{1}{2}$ of $K_D^{0}$ and create a singularity in (\ref{eq:ReworkLippmannSchwinger2-}). In this case, we have the \emph{Minnaert resonance} with surface-modes.
Its dominant part has the following value 
\begin{equation*}
k^2_M:=\sqrt{\frac{8 \, \pi \, a_0}{a_1 \, \Theta_{\partial D}}} \,\,\, \text{with} \,\,\, \Theta_{\partial D}:= \frac{1}{\left\vert \partial D \right\vert} \int_{\partial D}  \int_{\partial D} \frac{(x-y) \cdot \nu(x)}{4 \pi |x-y| } \, d\sigma(x) d\sigma(y) = \delta^2\; \Theta_{\partial B}.
\end{equation*}
This resonance was first observed in \cite{Habib-Minnaert} based on indirect integral equation methods. This observation was used for different purposes, see \cite{HFGLZ1, A-F-G-L-Z, A-F-L-Y-Z}.
This result was extended to more general families of micro-bubbles in  \cite{ACCS-JDE, ACCS-effective-media}.   

\bigskip

\item Second situation.\\
Observe that if $\alpha$ is negative as in the case of the Drude model for electromagnetism (recall here that $a$ plays the role of the inverse of the permeability and $b$ the one of the permittivity) i.e. 
\begin{equation}\label{Second}
\epsilon:=\epsilon_0 -\frac{k_p^{2}}{k(k +i\; \gamma_{dp})}~~ \left( \text{or} \; \mu:=\mu_0 -\frac{k_p^{2}}{k(k +i\; \gamma_{dp})} \right),
\end{equation}
with $k_p$ is the plasma frequency, $\gamma_{dp}$ the damping parameter and $k$ is our incident frequency. Then we could excite the other sequence of eigenvalues, $\sigma_n$, of $(K_D^{0})^*$ by choosing $\gamma_{dp}$ small in terms of $\delta$ and $k$ near the plasma frequency. Indeed, the operator $\displaystyle\frac{1}{\alpha} +\frac{1}{2} + ( K_D^0)^*$ is non injective if $k$ solve the dispersion equations $\displaystyle\frac{1}{\alpha}+\frac{1}{2}+ \sigma_p((K_D^{0})^*)=0$ which we write in terms of the Drude model as $\displaystyle\frac{\epsilon_{0} \, \epsilon}{\epsilon_{0} - \epsilon}+\frac{1}{2}+ \sigma_p((K_D^{0})^*)=0$ which gives, using $(\ref{Second})$ the dispersion equations $\displaystyle\frac{\epsilon_{0} \left[ k (k+ i \, \gamma_{dp}) \epsilon_{0} - k_{p}^{2} \right]}{k^{2}_{p}} +\frac{1}{2}+\sigma_n=0$. As $\frac{1}{2}+\sigma_n\geq 0$, then we should take the frequency $k^{2} < \displaystyle\frac{k_{p}^{2}}{\epsilon_{0}}$ so that real part of $\epsilon$ is negative and hence we can have solution of the above dispersion equations. This gives us another sequence of resonances $k^{2}_{n} := \displaystyle\frac{k^{2}_{p}}{\epsilon_{0}} \left(\epsilon_{0}  - \frac{1}{2} - \sigma_{n} \right)$ (i.e. corresponding to the sequence of plasmonics in electromagnetism). 

\bigskip

\item Third situation
 \begin{equation}
\label{Third}
a:= \begin{cases}
a_j\sim 1 \quad \text{inside } D_j,\\
a_0 \quad \text{outside } \overset{M}{\underset{j=1}{\cup}} D_{j}, \end{cases}
\text{and} \qquad
b:= \begin{cases}
b_j \sim \delta^{-2} \quad \text{inside } D_j,\\
b_0 \quad \text{outside } \overset{M}{\underset{j=1}{\cup}} D_j. \end{cases}\end{equation}

\bigskip

In this case, we have $\alpha \sim 1$ and then we keep away from the full spectrum of $(K_D^{0})^*$. Hence, there is no singularity coming from (\ref{eq:ReworkLippmannSchwinger2-}). But as $\gamma \sim \delta^{-2}\gg1$, we can excite the eigenvalues of the Newtonian operators $A^0$ and create singularities in (\ref{eq:ReworkLippmannSchwinger-1-}). This gives us a sequence of resonances with volumetric-modes. 
Indeed, we can write, after scaling,  
\begin{equation*}
I -  \gamma k^2 A^0 = I -  \gamma k^2 \,\delta^2 \, \widetilde{A^{0}},
\end{equation*}
where 
\begin{equation*}
\widetilde{A^{0}} \; v(x) := \int_{B} \, \Phi_0(\vert x-y\vert) \, v(y) \, dy.
\end{equation*}
Let $\{ \tilde{\lambda}_{n} \}_{n \geq 1},$ be the sequence of eigenvalues of $\widetilde{A^{0}}$. Then, the operator $[I -  \gamma k^2 A^0]$ is not injective if $k$ satisfies one of the following dispersion equations $1-\gamma k^2 \delta^2 \tilde{\lambda}_n=0$. The values $k_n:=\sqrt{\displaystyle\frac{1}{\gamma \, \delta^2 \, \tilde{\lambda}_n}}$ are called the dielectric resonances. This was observed in \cite{A-D-F-M-S} and \cite{M-M-S}. 

\end{enumerate}

The scattered field by the collection of inhomogeneities $D_j, j=1, ..., M,$ is given by a linear combination of poles or dipoles centered at the centers of the inhomogeneities. We call this dominating field the Foldy-Lax field as it is of the form of the scattered field generated by point-like scatterers (or Dirac-like potentials).
This field encodes the multiples scattering effect between the small inhomogeneities. 
 A natural, and actually an outstanding open question, was whether from the scattered field measured away from the collection of inhomogeneities we can reconstruct the field due to multiple interaction between them. 
In mathematical terms, is the approximating term, i.e. the Foldy-Lax field or its iterations (see later), dominating the error of the approximation? We call this property, the Foldy-Lax paradigm (or approximation). 
 There are attempts to justify this paradigm but without real success. 
To our knowledge, the only published paper addressing this issue is \cite{CASSIER} where acoustic waves in the harmonic regime was considered with sound-soft inhomogeneities (obstacles with Dirichlet condition). 
The Foldy-Lax approximation was justified when the $D_j$'s are discs. The particular geometry of these inhomogeneities allow them to perform exact computations. 
\bigskip

In this work, we show that for general shapes and even general transmission condition the Foldy-Lax approximation is valid as soon as nearly resonating incident frequencies are used. Our message is that the main reason why we can see the multiple scattering between the inhomogeneities is the fact that we excite them with incident frequencies close to their own resonances. To justify this claim, we consider inhomogeneities satisfying the conditions in (\ref{Third}), assuming that $a_{j}=a_{0}$ as well, and take $k$ close to the dielectric resonances $k_{n} := \sqrt{1 / \gamma \, \delta^{2} \, \lambda_{n}}$.
However, we do believe that similar results can be shown using the other condition (\ref{First}) and (\ref{Second}) and use incident frequencies close to the Minnaert resonance $k_M:=\sqrt{\displaystyle\frac{8 \, \pi \, a_0}{a_1 \, \Theta_{\partial D}}}$ or the plasmonic resonances $\displaystyle\frac{k^{2}_{p}}{\epsilon_{0}} \left(\epsilon_{0}  - \frac{1}{2} - \sigma_{n} \right)$.   
\bigskip
\newline

To state mathematically these results, we first set $d:=\underset{i\neq j \atop i,j=1; \cdots ; M}{\min}\; dist(D_i, D_j)$ and assume this minimum distance to be of the order:
\begin{equation}\label{d03D}
d=d_0\; \delta^t,\; t\geq 0, 
\end{equation} 
in the $3$ dimension case and
\begin{equation}\label{d02D}
d = d_0\; e^{-\vert log(\delta)\vert^{t}},\; t\geq 0,
\end{equation}
in the $2$ dimension case. In both cases, the constant  $d_0>0$ is independent on  $\delta$.

\bigskip

In the next theorem, we discuss the Foldy-Lax approximation when the inhomogeneities satisfy (\ref{Third}) in the $3$ dimension case. 
In this case, we recall that the Newtonian operator $A^0:=A^0_j$, on any $D_j$, reads
\begin{equation}\label{Newtonian-potential-3D}
A^0 : \mathbb{L}^2(D_j)\longrightarrow \mathbb{L}^2(D_j),~~~ A^0(u)(x):=\int_{D_j} \Phi_{0}(x,y)u(y) dy=\int_{D_j}\frac{1}{4\pi \vert x-y\vert}\; u(y) dy.
\end{equation}

\begin{theorem}\label{3D-case} Let $k_n:=\sqrt{\displaystyle\frac{1}{\gamma \, \lambda_n}}$ where $\lambda_n$ is any eigenvalue of the Newtonian operator $A^0$ \footnote{Recall that in 3D, $\lambda_n=\delta^2 \tilde{\lambda}_n$ where $\tilde{\lambda}_n$ are the eignevalues of $\widetilde{A_0}$.}. We choose the incident frequency $k$ satisfying
\begin{equation}\label{omega}
k^2:=k^{2}_{n}\left( 1 \pm \delta^{h} \right),\; \delta <<1.
\end{equation}
Under the condition $t+h\leq 1$, we have the expansion:
\begin{equation}\label{intro-usca}
u^{s}(x,\theta)  =  \sum^M_{j=1} \Phi_{k}(x,z_j) \; C_{j} \; Q_j \; + \mathcal{O}\left( \delta^{\min\{2-h, 3-2h-2t\}} \right),
\end{equation}
where $Q:=(Q_1, ..., Q_M)$ is the unique solution of the algebraic system:
\begin{equation*}
\left( I - B_{k} \right) \cdot Q \; = \; U,
\end{equation*}  
where $U:=(u^i(z_1, \theta), \cdots , u^i(z_M, \theta))$ and $B_k:=\left( B_{k, i, j} \right)^{M}_{i, j=1}$ with\footnote{Where $\bm{\delta}$ is the Kronecker symbol.} 
\begin{eqnarray*}
B_{k, i, j} &:=& \frac{C_j}{\left( 1 - i \, k \, C_{i} \right)} \,\, \Phi_{k}(z_i, z_j) \, \left(1 - \underset{i,j}{\bm{\delta}} \right) \\ & \mbox{and} & \\  C_{j} &:=& \int_{D_j} \left[ \frac{a_{0}}{k^2 \, b}I - A^{0} \right]^{-1}(1)(x) \, dx.
\end{eqnarray*}
\end{theorem}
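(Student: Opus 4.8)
\emph{Proof outline.} The plan is to reduce the transmission problem to a purely volumetric Lippmann--Schwinger equation posed on $D$, to localise it into $M$ coupled particle-blocks, to invert each diagonal block by exhibiting its near-resonant blow-up through the spectral resolution of the Newtonian operator, and finally to test against constants so as to close the algebraic Foldy--Lax system, tracking every error through the large factor carried by the diagonal resolvent.

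\textbf{Step 1 (reduction).} Under \eqref{Third} with the extra assumption $a_j=a_0$ the contrast $\alpha=a_1-a_0$ vanishes, so the boundary term in \eqref{eq:ReworkLippmannSchwinger-1-} disappears and \eqref{eq:ReworkLippmannSchwinger2-} is void; hence $u|_D$ solves $[\zeta I-A^k]u=\zeta\,u^i$ on $D$ with $\zeta:=(\gamma k^2)^{-1}$, while $u^s(x)=\gamma k^2\int_D\Phi_k(x,y)\,u(y)\,dy$ for $x\notin\overline D$. Writing $u_j:=u|_{D_j}$ and decomposing $A^k$ into the diagonal blocks $A^k_{jj}$ (the Newtonian operator \eqref{Newtonian-potential-3D} with kernel $\Phi_k$) and the off-diagonal blocks $A^k_{ij}$, $i\neq j$ (kernels bounded by $(4\pi d)^{-1}$ because of \eqref{d03D}), the system becomes $[\zeta I-A^k_{ii}]u_i=\zeta\,u^i|_{D_i}+\sum_{j\neq i}A^k_{ij}u_j$, $i=1,\dots,M$. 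Existence and uniqueness of $u$ for real $k$ is classical; the work is to quantify its $\delta$-dependence.

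\textbf{Step 2 (diagonal resolvent).} After the dilation $x=z_j+\delta\xi$ the block $A^0_{jj}$ becomes $\delta^2\widetilde{A^0}$, self-adjoint, compact and positive on $L^2(B_j)$; let $\lambda_n=\delta^2\widetilde\lambda_n$ be the distinguished eigenvalue (isolated, of finite multiplicity), $\Pi_n$ its spectral projection, and $R^0_j:=[\zeta I-A^0_{jj}]^{-1}$. Since $\gamma k_n^2\lambda_n=1$, the choice \eqref{omega} gives $\zeta=\lambda_n\bigl(1\mp\delta^h+O(\delta^{2h})\bigr)$, so $\operatorname{dist}\bigl(\zeta,\sigma(A^0_{jj})\bigr)\asymp\delta^{2+h}$ while the remaining eigenvalues stay $\asymp\delta^2$ away, whence $R^0_j=(\zeta-\lambda_n)^{-1}\Pi_n+R^{\perp}_j$ with $\|R^{\perp}_j\|=O(\delta^{-2})$ and $\|R^0_j\|\asymp\delta^{-2-h}$. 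To pass to $A^k_{jj}$ note that the difference kernel $\Phi_k-\Phi_0=(e^{i\kappa r}-1)/(4\pi r)$, $\kappa:=k\sqrt{b_0/a_0}$, equals the constant $i\kappa/(4\pi)$ plus an $O(\delta)$ remainder on $D_j$, so $A^k_{jj}=A^0_{jj}+\tfrac{i\kappa}{4\pi}\langle\cdot,\mathbf 1\rangle_{L^2(D_j)}\mathbf 1+E_j$ with $\|E_j\|=O(\delta^{4})$ and $\|A^k_{jj}-A^0_{jj}\|\asymp\delta^{3}$. A Sherman--Morrison update of $R^0_j$ treats the rank-one constant term exactly and produces the self-interaction factor $(1-ikC_j)^{-1}$ (up to the normalisation of $\Phi$), while $E_j$ is absorbed by a Neumann series, legitimate because $\|R^0_j\|\,\|E_j\|=O(\delta^{2-h})=o(1)$ and, crucially, $\|R^0_j\|\,\|A^k_{jj}-A^0_{jj}\|=O(\delta^{1-h})$ — this is where $t+h\le 1$, forcing $h<1$, enters. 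In particular $\int_{D_i}[\zeta I-A^k_{ii}]^{-1}(\mathbf 1)=C_i/(1-ikC_i)$ with $C_i$ as in the statement, and $C_i\asymp(\zeta-\lambda_n)^{-1}\|\Pi_n\mathbf 1\|^2\asymp\delta^{1-h}$ provided $\Pi_n(\mathbf 1)\neq 0$.

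\textbf{Step 3 (closing the system) and error bookkeeping.} Freezing $u^i$ at $z_i$ (relative error $O(\delta)$) and $\Phi_k(x,y)$ at $(z_i,z_j)$ for $x\in D_i$, $y\in D_j$ (relative error $O(\delta/d)=O(\delta^{1-t})$), the $i$-th block equation takes the form: a constant times $\mathbf 1$ plus a controlled remainder, so $u_i=\bigl(\zeta u^i(z_i)+\sum_{j\neq i}\Phi_k(z_i,z_j)\!\int_{D_j}\!u_j\bigr)[\zeta I-A^k_{ii}]^{-1}(\mathbf 1)+(\text{remainder})$; integrating over $D_i$, using the preceding step, and setting $Q_i:=\gamma k^2\bigl(\!\int_{D_i}\!u_i\bigr)\big/C_i$ (so that $u^s(x)=\gamma k^2\sum_j\Phi_k(x,z_j)\!\int_{D_j}\!u_j=\sum_j\Phi_k(x,z_j)C_jQ_j+\cdots$) yields $(I-B_k)Q=U+\mathcal E$ with $B_k$, $U$ as stated and the diagonal self-interaction carried by the factor $(1-ikC_i)^{-1}$ inside $B_k$. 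Since $|B_{k,i,j}|\asymp\delta^{1-h-t}$, under $t+h\le 1$ the matrix $I-B_k$ is invertible with $O(1)$ inverse, so $Q$ is determined modulo $\mathcal E$. Summing the errors: the far-field replacement $\int_{D_j}\Phi_k(x,y)u_j\mapsto\Phi_k(x,z_j)\!\int_{D_j}\!u_j$ costs $O(\delta)\,|C_jQ_j|=O(\delta^{2-h})$; the shape/coupling remainders in the block equations, amplified by $\|[\zeta I-A^k_{ii}]^{-1}\|\asymp\delta^{-2-h}$, enter $\mathcal E$ as $O\bigl(\delta^{\min\{1,\,2-h-2t\}}\bigr)$ and hence contribute $O\bigl(\delta^{\min\{2-h,\,3-2h-2t\}}\bigr)$ to $u^s=\sum_j\Phi_k(x,z_j)C_jQ_j$; and the substitution $A^k_{ii}\rightsquigarrow A^0$ in the definition of $C_i$, after the exact Sherman--Morrison step, leaves only an $O(\delta^{3-2h})$ residue. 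Collecting these gives \eqref{intro-usca}.

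\textbf{Main obstacle.} The crux is the uniform-in-$\delta$ control of the near-resonant diagonal resolvent $[\zeta I-A^k_{ii}]^{-1}$, whose norm grows like $\delta^{-2-h}$, so that every modelling error is amplified by that factor. One must therefore establish: (i) that $\lambda_n$ is isolated with spectral gap $\asymp\delta^2$ inside $\sigma(A^0)$, and that $\Pi_n(\mathbf 1)\neq 0$, so the Foldy--Lax term is genuinely of size $\asymp\delta^{1-h}$ and not smaller; (ii) that the non-self-adjoint, $\delta$-dependent perturbation $A^k_{ii}-A^0_{ii}$ (of size $\asymp\delta^{3}$) neither pushes $\zeta$ out of the resolvent-controlled window nor spoils the Sherman--Morrison structure — exactly the step forcing $t+h\le 1$; and (iii) that, after amplification, the total residue remains of lower order than $\sum_j\Phi_k(x,z_j)C_jQ_j$, which is what the exponents $2-h$ and $3-2h-2t$ encode. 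With (i)--(iii) in hand the off-diagonal Neumann series is routine, since its small parameter $\|[\zeta I-A^k_{ii}]^{-1}A^k_{ij}\|\asymp\delta^{1-h-t}$ is $\le 1$.
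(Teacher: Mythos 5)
Your proposal is correct and follows essentially the same route as the paper: reduction to the volumetric Lippmann--Schwinger equation, spectral decomposition of the Newtonian operator to quantify the near-resonant amplification (yielding $C_j\asymp\delta^{1-h}$ and the resolvent norm $\asymp\delta^{-2-h}$), extraction of the self-interaction factor $(1-ikC_j)^{-1}$ from the constant part of $\Phi_k-\Phi_0$, Taylor expansion about the centers, and closure of the algebraic system with the same error exponents $\min\{2-h,\,3-2h-2t\}$. Your Sherman--Morrison/resolvent phrasing is only a repackaging of the paper's ``apply $[I-\gamma k^2A^0]^{-1}$ and integrate over $D_m$'' step, so no substantive difference arises.
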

As a consequence of the previous theorem, we state the following corollary.
\begin{corollary}\label{coro12}
For any integer $N$, we define:
\begin{equation*}
Q^{N} := \sum_{n=0}^{N} \, B_{k}^{n} \cdot U \quad \text{and} \quad u^{s,N}(x,\theta)  :=  \sum^M_{j=1} \Phi_{k}(x,z_j) \; C_{j} \; Q^N_j.
\end{equation*}

\begin{enumerate}

\item Under the following condition:
\begin{equation}\label{intro-condition-Foldy-Lax-proof}
0\leq 1-h-t \leq \min\left\{ \frac{1}{N+1}, \frac{1-t}{N} \right\},
\end{equation}
we have 
\begin{equation}\label{intro-N-interactions-proof}
u^{s}(x,\theta) - u^{s,N}(x,\theta) =  \mathcal{O}\left( \delta^{(1-h) + (N+1)(1-t-h)} \right)
\end{equation}
and  
\begin{equation}\label{intro-N-interactions-scattered-field}
u^{s,N}(x,\theta) - u^{s,N-1}(x,\theta)\; \sim \;  \delta^{(1-h) + N(1-t-h)} \gg \delta^{(1-h) + (N+1)(1-t-h)}, 
\end{equation}
for any bounded domain away from the collection of centers $z_j, j=1, ..., M$. Here, $u^{s,N}(x,\theta)$ is the scattered field after $N$-interactions between the nano-particles.  
\bigskip

\item Under the limit condition $1-t-h=0$ and $C_{0} \, d_{0} < 1$, where $C_{0}$ is such that $C = C_{0} \, \delta^{1-h}$ and $d_{0}$ satisfy $(\ref{d03D})$, we have the expansion:
\begin{equation}\label{intro-infty-interactions-proof}
u^{s}(x,\theta) - u^{s,\infty}(x,\theta) = \mathcal{O}\left( \delta \right),\;  \delta \ll 1,
\end{equation}
where $$u^{s,\infty}(x,\theta):= \sum^M_{j=1} \Phi_{k}(x,z_j) \; C_{j} \; Q^\infty_j \sim \delta^{1-h} \sim \delta^t $$ for in any bounded domain away from the collection of centers $z_{j}, j=1, \cdots, M$. Here $u^{s,\infty}(x,\theta)$ is the field generated after all the interactions between the particles, i.e. $Q^\infty:=(Q_1^\infty, \cdots, Q_M^\infty)$ is given by 
\begin{equation}
 Q^\infty=\sum^\infty_{n=0}B^n_k \cdot U.
\end{equation} We call $u^{s,\infty}(\cdot,\theta)$ the Foldy-Lax field.
\end{enumerate}
\end{corollary}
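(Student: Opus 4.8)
The plan is to read the whole corollary off the finite linear system $(I-B_k)\,Q=U$ of Theorem~\ref{3D-case} by a Neumann-series analysis, whose truncation errors are then compared with the remainder $\mathcal O(\delta^{\min\{2-h,\,3-2h-2t\}})$ already controlled in \eqref{intro-usca}. The one quantitative input I need is the size of the entries of $B_k$. From the spectral decomposition of the Newtonian operator $A^0$ at its eigenvalue $\lambda_n$ (as in the proof of Theorem~\ref{3D-case}) one has $C_j=C_{0,j}\,\delta^{1-h}(1+o(1))$ with $C_{0,j}\neq0$, hence $|C_j|\sim\delta^{1-h}$ and $|1-i\,k\,C_i|\gtrsim1$; since $|z_i-z_j|\ge d=d_0\,\delta^{t}$ for $i\neq j$ and $|\Phi_k(z_i,z_j)|=(4\pi|z_i-z_j|)^{-1}$, the off-diagonal entries of $B_k$ satisfy $|B_{k,i,j}|\lesssim\delta^{1-h}\delta^{-t}=\delta^{1-h-t}$, so $\rho:=\|B_k\|\lesssim\delta^{1-h-t}$ in the $\ell^\infty$ operator norm. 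For $1-h-t>0$ (the case relevant to part~(1)) this forces $\rho\to0$, so $(I-B_k)^{-1}=\sum_{n\ge0}B_k^n$ for $\delta$ small; for $1-h-t=0$ (part~(2)) one uses the hypothesis $C_0\,d_0<1$, which after scaling yields $\rho<1$, to keep the series convergent. In both regimes $Q=\sum_{n\ge0}B_k^n U=Q^\infty$, and $|U|=\mathcal O(1)$ since $|u^i(\cdot,\theta)|=1$.

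For part~(1) I would write $Q-Q^N=B_k^{N+1}(I-B_k)^{-1}U$, whence $|Q-Q^N|\le\rho^{N+1}(1-\rho)^{-1}|U|\lesssim\delta^{(N+1)(1-h-t)}$. For $x$ in a bounded set away from the centres $|\Phi_k(x,z_j)|=\mathcal O(1)$, so subtracting $u^{s,N}(x,\theta)$ from the Foldy--Lax sum of Theorem~\ref{3D-case} gives
\begin{equation*}
\Bigl|\, \sum_{j=1}^M \Phi_k(x,z_j)\, C_j\,(Q_j-Q_j^N) \,\Bigr| \ \le\ \sum_{j=1}^M |\Phi_k(x,z_j)|\,|C_j|\,|Q_j-Q_j^N| \ \lesssim\ \delta^{(1-h)+(N+1)(1-h-t)} .
\end{equation*}
Adding the remainder of \eqref{intro-usca} and noticing that the two upper bounds $1-h-t\le\frac1{N+1}$ and $1-h-t\le\frac{1-t}{N}$ appearing in \eqref{intro-condition-Foldy-Lax-proof} are exactly equivalent to
\begin{equation*}
(1-h)+(N+1)(1-h-t)\ \le\ 2-h \qquad\text{and}\qquad (1-h)+(N+1)(1-h-t)\ \le\ 3-2h-2t ,
\end{equation*}
one sees that the Theorem~\ref{3D-case} remainder is itself $\mathcal O(\delta^{(1-h)+(N+1)(1-h-t)})$, which yields \eqref{intro-N-interactions-proof}.

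For the sharpness statement \eqref{intro-N-interactions-scattered-field} write $u^{s,N}-u^{s,N-1}=\sum_{j=1}^M\Phi_k(x,z_j)\,C_j\,(B_k^N U)_j$; the bound $\lesssim\delta^{(1-h)+N(1-h-t)}$ is immediate, and for the matching lower bound I would factor $B_k=\delta^{1-h-t}\mathcal B_\delta$ with $\mathcal B_\delta$ uniformly bounded and not tending to $0$ (its entry for the closest pair of centres stays of size $\sim(4\pi d_0)^{-1}|C_{0,j}|$), so that $u^{s,N}-u^{s,N-1}=\delta^{(1-h)+N(1-h-t)}\,S_\delta(x)$ with $S_\delta$ a finite combination of fundamental solutions that stays bounded away from $0$ on the given domain for generic clusters and incidence; the displayed $\gg$ is then just $\delta^{-(1-h-t)}\to\infty$, valid on the open part $1-h-t>0$ of \eqref{intro-condition-Foldy-Lax-proof}. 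Part~(2) follows at once: when $1-t-h=0$ and $C_0\,d_0<1$ the series $Q^\infty$ converges to $Q=(I-B_k)^{-1}U$, so $u^{s,\infty}(\cdot,\theta)$ is \emph{exactly} the Foldy--Lax sum $\sum_j\Phi_k(\cdot,z_j)C_jQ_j$ of Theorem~\ref{3D-case}; hence $u^s-u^{s,\infty}=\mathcal O(\delta^{\min\{2-h,\,3-2h-2t\}})$ by \eqref{intro-usca}, and with $h=1-t$ the exponent becomes $\min\{1+t,1\}=1$, i.e. \eqref{intro-infty-interactions-proof}; finally $u^{s,\infty}\sim\delta^{1-h}=\delta^{t}$, because $C_j\sim\delta^{1-h}$, $Q^\infty=U+\mathcal O(\|B_k\|)$ has modulus $\sim1$ (its leading term being $U$, of modulus one), and $|\Phi_k(x,z_j)|=\mathcal O(1)$ off the centres.

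Given Theorem~\ref{3D-case}, the corollary is mostly bookkeeping, and the two points that deserve care are: (i) in the borderline regime $1-h-t=0$, converting the hypothesis ``$C_0\,d_0<1$'' into the operator inequality $\|B_k\|<1$, which needs a geometric estimate of $\sum_{j\neq i}|z_i-z_j|^{-1}$ in terms of the minimal separation $d$; and (ii) the non-degeneracy behind the sharpness claims \eqref{intro-N-interactions-scattered-field} and $u^{s,\infty}\sim\delta^{t}$, namely that the relevant finite sums of fundamental solutions do not vanish identically — equivalently $B_k^N U\neq0$ and $Q^\infty\neq0$ — which holds for generic clusters and incident directions. Both ultimately rest on the single genuinely analytic ingredient, inherited from the proof of Theorem~\ref{3D-case}: that the near-resonant choice $k^2=k_n^2(1\pm\delta^h)$ makes $[\,\tfrac{a_0}{k^2 b}I-A^0\,]^{-1}$ blow up at exactly the rate $\delta^{-h}$ and leaves a nonzero projection of the constant $1$ onto the $\lambda_n$-eigenspace of $A^0$; that spectral estimate, rather than anything internal to the corollary, is where the real work sits.
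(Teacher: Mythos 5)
Your proposal is correct and follows essentially the same route as the paper: the Neumann-series inversion of $(I-B_{k})$ based on the bound $\Vert B_{k}\Vert\lesssim\delta^{1-h-t}$ (with $C_{0}d_{0}<1$ in the borderline case), the truncation estimate $\delta^{(1-h)+(N+1)(1-h-t)}$, and its comparison with the remainder $\mathcal{O}(\delta^{\min\{2-h,\,3-2h-2t\}})$ of Theorem \ref{3D-case}, which is exactly how condition (\ref{intro-condition-Foldy-Lax-proof}) arises. The only difference is that you are more explicit than the paper about the non-degeneracy (genericity of the cluster and of $B_{k}^{N}U$, $Q^{\infty}\neq 0$) implicitly needed for the lower bounds in (\ref{intro-N-interactions-scattered-field}) and for $u^{s,\infty}\sim\delta^{t}$, which the paper asserts without comment.
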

\vspace{3mm}
The part (1) of Corollary \ref{coro12} means that if we want to reconstruct the field encoding the interactions between the particles until the order $N$, for a given integer $N$, we need to use an incident frequency close to the resonance at the order $O(a^h)$ with $h$ satisfying (\ref{intro-condition-Foldy-Lax-proof}). In addition, under the optimal condition $1-t-h=0$, we can see all the interactions between the particles. Therefore under this condition between the closeness of the particles and closeness of the incident frequency to the used chosen resonance, the Foldy field can be fully reconstructed and hence the whole interactions between the particles can be 'seen'.
\bigskip

The coming corollary, gives more details about the case when the particles are away from each other, i.e. when we can take $t=0$. 
\begin{corollary}\label{coro13}
In this case, the condition (\ref{intro-condition-Foldy-Lax-proof}), reads as:
\begin{equation}\label{intro-condition-Foldy-Lax-proof-t=0}
0 \leq 1-h \leq \frac{1}{N+1}.
\end{equation}
and  
\begin{equation}\label{intro-N-interactions-proof-t=0}
u^{s}(x,\theta) - u^{s,N}(x,\theta)=\mathcal{O}\left( \delta^{(N+2)(1-h)} \right),
\end{equation}
with 
\begin{equation*}
u^{s,N}(x,\theta) - u^{s,N-1}(x,\theta) \sim \delta^{(N+1)(1-h)} \gg  \delta^{(N+2)(1-h)}.
\end{equation*}

\end{corollary}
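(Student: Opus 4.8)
The plan is to obtain Corollary \ref{coro13} as the specialization of Corollary \ref{coro12} to the well-separated regime $t = 0$, i.e. to the case in which the minimal distance $d = d_0\,\delta^t$ in (\ref{d03D}) is a fixed constant $d_0 > 0$ independent of $\delta$. First I would record that, with $t=0$, the standing hypothesis $t+h \le 1$ of Theorem \ref{3D-case} reduces to $h \le 1$, which is precisely the assumption $0 \le 1-h$ of the corollary; hence the expansion (\ref{intro-usca}) and the full conclusion of Corollary \ref{coro12}(1) are at our disposal, and no analogue of the extra smallness condition $C_0 d_0 < 1$ (needed only for the $N=\infty$ statement) is required here.

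Next I would rewrite the hypothesis (\ref{intro-condition-Foldy-Lax-proof}) of Corollary \ref{coro12} at $t=0$. Its right-hand side is
$$\min\left\{ \frac{1}{N+1},\ \frac{1-t}{N}\right\} \;=\; \min\left\{ \frac{1}{N+1},\ \frac{1}{N}\right\} \;=\; \frac{1}{N+1},$$
since $N+1 > N$, while the left-hand side $1-h-t$ becomes $1-h$. Therefore (\ref{intro-condition-Foldy-Lax-proof}) is equivalent to $0 \le 1-h \le \frac{1}{N+1}$, which is exactly (\ref{intro-condition-Foldy-Lax-proof-t=0}).

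Then I would substitute $t=0$ into the two conclusions of Corollary \ref{coro12}(1). The exponent in (\ref{intro-N-interactions-proof}) becomes
$$(1-h) + (N+1)(1-t-h) \;=\; (1-h) + (N+1)(1-h) \;=\; (N+2)(1-h),$$
which yields (\ref{intro-N-interactions-proof-t=0}); and the exponent in (\ref{intro-N-interactions-scattered-field}) becomes $(1-h) + N(1-t-h) = (N+1)(1-h)$, so that $u^{s,N}(x,\theta) - u^{s,N-1}(x,\theta) \sim \delta^{(N+1)(1-h)}$ on any bounded domain away from the centres $z_j$. Finally, since $(N+1)(1-h) < (N+2)(1-h)$ whenever $1-h > 0$, we get $\delta^{(N+1)(1-h)} \gg \delta^{(N+2)(1-h)}$, which is the last display.

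The only ingredient that is not a pure substitution is the sharpness (the $\sim$, as opposed to a mere $\mathcal{O}$) of the consecutive-difference estimate: it is inherited from the corresponding sharpness already established in the proof of Corollary \ref{coro12}, which rests on the non-degeneracy of the Foldy-Lax algebraic system, on $|u^i(z_j,\theta)| = 1$ for the plane waves, and on the leading order $C_j \sim \delta^{1-h}$ of the scattering coefficients at a nearly-resonating frequency, so that $B_k^N \cdot U$ does not collapse below order $\delta^{N(1-h)}$. I expect this non-degeneracy check — which is not re-done here but carried over from Corollary \ref{coro12} — to be the only non-routine point; the rest of the argument is arithmetic.
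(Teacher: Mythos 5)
Your proposal is correct and follows exactly the route the paper intends: the paper's own justification of Corollary \ref{coro13} is simply the remark that one repeats the argument of Corollary \ref{coro12} with $t=0$, and your substitution of $t=0$ into (\ref{intro-condition-Foldy-Lax-proof}), (\ref{intro-N-interactions-proof}) and (\ref{intro-N-interactions-scattered-field}) together with the evaluation $\min\{1/(N+1),1/N\}=1/(N+1)$ reproduces all three displayed claims. Your closing observation that the sharpness ($\sim$ rather than $\mathcal{O}$) of the consecutive-difference estimate is inherited from Corollary \ref{coro12}, and holds only when $1-h>0$, is consistent with the paper's treatment.
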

\bigskip

The estimate (\ref{intro-N-interactions-proof-t=0}) 
means that the Foldy-Lax approximation, at a given order of interaction  $N$, is also valid when the nano-particles are away from each other as soon as the used incident frequency is close to the resonance $k_{n_0}$, i.e. (\ref{omega}) is valid with $h\in (0, 1)$ such that
(\ref{intro-condition-Foldy-Lax-proof-t=0}) is satisfied.
\bigskip

Next, we state the corresponding results for the $2$ dimensional case.
 For this, we state the natural Newtonian operator $A^0:=A^0_j$ as:
\begin{equation}\label{Newtonian-potential-2D}
A^0 : \mathbb{L}^2(D_j) \longrightarrow \mathbb{L}^2(D_j),~~~ A^0(u)(x):=\int_{D_j} \Phi_{0}(x,y)u(y) dy=-\int_{D_j}\frac{1}{2\pi} \log(\vert x-y\vert) \; u(y) dy.
\end{equation}
In dimension 2, we need the following contrast of the coefficients $a_j$ and $b_j$.
\begin{equation}
\label{Third-2D}
a:= \begin{cases}
a_j\sim 1 \quad \text{inside } D_j,\\
a_0 \quad \text{outside } \overset{M}{\underset{j=1}{\cup}} D_{j}, \end{cases}
\text{and} \qquad
b:= \begin{cases}
b_j \sim \delta^{-2}\left\vert \log(\delta) \right\vert^{-1} \quad \text{inside } D_j,\\
b_0 \quad \text{outside } \overset{M}{\underset{j=1}{\cup}} D_j. \end{cases}\end{equation}
Compared to the 3D case, here we have contrasts of the order $\delta^{-2}\left\vert \log(\delta) \right\vert^{-1}$ instead of $\delta^{-2}$. Such scales are dictated by the $\log$-type singularity of the fundamental solution, see \cite{2DGHANDRICHE}. Therefore, we need the following hypothesis on the behavior of the eigenvalues and eigenfunctions of the 2D Newtonian operator:

\begin{Hypotheses}\label{hyp}
The particles $D_j$, of radius $\delta,\; \delta \ll 1$, are taken such that the spectral problem \, $A_0 u =\lambda\; u, \mbox{ in } D_j$, has eigenvalues $\lambda_n$ and corresponding eigenfunctions, $e_n$, satisfying the following properties:
\newline

\begin{enumerate}
 \item $\int_{D_j} e_n(x) dx \neq 0, \; \forall \delta \ll 1.$
 \bigskip
 
 \item $\lambda_n \sim \delta^2 \vert \log(\delta)\vert, \; \forall \delta \ll 1.$ 
\end{enumerate}
\end{Hypotheses}

In \cite{2DGHANDRICHE}, see the appendix there, it is shown that for particles of general shapes, the first eigenvalue and the corresponding eigenfunctions satisfy {\bf{Hypotheses}} \ref{hyp}. 
In addition, the properties of the eigenvalues for the case when the particles are discs are characterized.

\begin{theorem}\label{2D-case} Assume {\bf{Hypotheses}} \ref{hyp} to be satisfied and let $k_{n} := \sqrt{\displaystyle\frac{1}{\gamma  \; \lambda_n}}$ where $\lambda_n$ is any eigenvalue of the Logarithmic Newtonian operator $A^0$ \footnote{From (\ref{Third-2D}), we see that $\gamma \sim \delta^{-2} \vert \log(\delta)\vert^{-1}$. Therefore and the second part of {\bf{Hypotheses}} \ref{hyp}, we deduce that $k_n =O(1)$ for $\delta \ll 1$.}. We choose the incident frequency $k$ satisfying
\begin{equation}\label{omega-2D}
k^2:=k^2_{n} \left( 1 \pm \left\vert \log(\delta) \right\vert^{-h} \right),\;\; \delta<<1.
\end{equation}
Under the condition $t+h\leq 1$, we have the expansion:
\begin{eqnarray}\label{intro-usca-2D}
\nonumber
u^{s}(x,\theta) &=& \sum^M_{j=1} \Phi_{k}\left(x, z_{j} \right) \; C_{j}^{\star} \; Q_j +  \mathcal{O}\left( \delta^{1-t} \, \left\vert \log(\delta) \right\vert^{2 \, (h-1)} \right) + \mathcal{O}\left( \delta \, \left\vert \log(\delta) \right\vert^{(h-1)} \right),
\end{eqnarray}
where $Q:=\left( Q_{1},\cdots, Q_{M} \right)$ is the unique solution of the algebraic system
\begin{equation*}
\left( I - B_{k} \right) \cdot Q \; = \; U,
\end{equation*}  
where $U := \left( u^i(z_{1}, \theta), \cdots, u^i(z_{M}, \theta) \right)$ and $B_k:=(B_{k, i, j})^M_{i, j=1}$ where 
\begin{equation}
B_{k,i, j} := \Phi_{k} \left(z_{i}, z_{j}\right) \; \left[ C^{-1}_{j} \, - \, \bm{E} \right]^{-1} \; \left(1 - \underset{i,j}{\bm{\delta}} \right)
\end{equation}
and  
\begin{eqnarray*}
C_{j} &:=& \int_{D_j} \left[ \frac{a_{0}}{k^2 \,   b}I - A^{0} \right]^{-1}(1)(x)dx 
\end{eqnarray*}
with 
\begin{equation*}
\bm{E} := \frac{i}{4} - \frac{1}{2 \pi}\left[ \log\left( \frac{k}{2} \right) + \displaystyle\lim_{p \rightarrow +\infty} \left( \sum_{m=1}^{p} \frac{1}{m} - \log(p) \right) \right].
\end{equation*}
\end{theorem}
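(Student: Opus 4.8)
The plan is to carry out a perturbation analysis of the closed system \eqref{eq:ReworkLippmannSchwinger-1-}--\eqref{eq:ReworkLippmannSchwinger2-} around the resonant frequency, paralleling the 3D case (Theorem \ref{3D-case}) but keeping careful track of the logarithmic scales dictated by the 2D fundamental solution. First I would rescale everything to the reference domain $B_j$: writing $x = z_j + \delta \xi$, the Newtonian operator $A^0$ on $D_j$ becomes $\delta^2$ times a logarithmically-corrected version of $\widetilde{A^0}$ on $B_j$, and one isolates the leading $\vert\log(\delta)\vert$ contribution coming from the $\log\vert x-y\vert = \log\delta + \log\vert\xi-\eta\vert$ splitting. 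The key algebraic object is the operator $\left[\frac{a_0}{k^2 b}I - A^0\right]^{-1}$ acting on the constant function $1$; since $\frac{a_0}{k^2 b} \sim \delta^2\vert\log(\delta)\vert\,\gamma\,\lambda_n/(1\pm\vert\log\delta\vert^{-h})$ and $\lambda_n$ is exactly an eigenvalue of $A^0$, this inverse is nearly singular, and by the spectral decomposition its dominant part is $\left(\frac{a_0}{k^2 b} - \lambda_n\right)^{-1}\langle 1, e_n\rangle\, e_n$, where the near-vanishing denominator is $\sim \lambda_n(1 - (1\pm\vert\log\delta\vert^{-h})) = \mp\lambda_n\vert\log\delta\vert^{-h} \sim \delta^2\vert\log\delta\vert^{1-h}$. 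Using part (1) of \textbf{Hypotheses} \ref{hyp} to guarantee $\langle 1, e_n\rangle \neq 0$, one then gets $C_j \sim \delta^2 \cdot (\delta^2\vert\log\delta\vert^{1-h})^{-1} = \vert\log\delta\vert^{h-1}$, which is the correct order for the scattering coefficient in 2D.

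Next I would derive the Foldy-Lax algebraic system. Substituting the Lippmann-Schwinger representation into the far-away formula $u^s(x) = \gamma k^2 \int_D \Phi_k(x,y) u(y)\,dy - \alpha\int_{\partial D}\Phi_k(x,y)\partial_\nu u\,dy$, and using that on each $D_j$ the kernel $\Phi_k(x,y)$ is essentially constant $\approx \Phi_k(x,z_j)$ for $x$ in a bounded region away from the cluster, the scattered field collapses to $\sum_j \Phi_k(x,z_j)\, C_j^\star\, Q_j$ up to the stated remainders, where $Q_j$ is proportional to the average of $u$ over $D_j$. The closed system for $Q$ is obtained by evaluating the representation at the points $z_i$ themselves: the self-interaction term (the $i=j$ term) produces the constant $\bm E$, which is precisely the $y$-independent part of $\Phi_k(z_i,\cdot) - \Phi_0(z_i,\cdot)$ at coincidence — the $\frac{i}{4} - \frac{1}{2\pi}\log(k/2)$ plus the Euler-Mascheroni limit — since $\Phi_k(x,y) = -\frac{1}{2\pi}\log\vert x-y\vert + \bm E + o(1)$ for the 2D Helmholtz Green's function. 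This is why $C_j^{-1} - \bm E$ appears in $B_{k,i,j}$ rather than just $C_j$: the diagonal contribution of the volume integral over $D_j$ must be absorbed into the algebraic system, unlike 3D where the analogous constant does not appear at leading order. The off-diagonal entries are straightforwardly $\Phi_k(z_i,z_j) C_j (\ldots)$ after the same constant-kernel reduction.

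Then I would estimate the remainders and establish invertibility of $I - B_k$. The two error terms $\mathcal{O}(\delta^{1-t}\vert\log\delta\vert^{2(h-1)})$ and $\mathcal{O}(\delta\vert\log\delta\vert^{h-1})$ come, respectively, from: (a) the error in replacing $\Phi_k(x,y)$ by $\Phi_k(x,z_j)$ in the \emph{mutual} interaction terms, which carries a factor $\mathrm{diam}(D_j)/d = \delta/d \sim \delta^{1-t}$ in the worst case (recall \eqref{d02D} and $t+h\leq 1$), each $Q_j$ contributing a factor $\sim\vert\log\delta\vert^{h-1}$ and there being effectively $\vert\log\delta\vert^{h-1}$-sized amplification through the near-resonance — giving the $\vert\log\delta\vert^{2(h-1)}$; and (b) the error in the \emph{self}-interaction from the sub-leading part of the Newtonian operator and the regularization of $\Phi_k - \Phi_0$, scaling like $\delta\cdot\vert\log\delta\vert^{h-1}$. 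For invertibility of $I - B_k$ one uses a Neumann-series / Schur-test argument: the matrix $B_k$ has entries of size $\Phi_k(z_i,z_j)\cdot(C_j^{-1}-\bm E)^{-1}$, and under $t+h\leq 1$ one checks the row sums stay controlled, with strict smallness when $1-t-h>0$; at the borderline $1-t-h=0$ one invokes a condition like the $C_0 d_0 < 1$ appearing in Corollary \ref{coro12}. The main obstacle I anticipate is precisely the bookkeeping of the logarithmic factors — in 2D the "small parameter" is $\vert\log\delta\vert^{-1}$ rather than a power of $\delta$, so the near-resonance denominator, the scattering coefficient $C_j$, and the regularized Green's constant $\bm E$ all intertwine logarithms and powers of $\delta$, and one must verify that the claimed error orders are genuinely attained and not swamped; in particular, proving the spectral gap estimate $\left(\frac{a_0}{k^2 b} - \lambda_m\right)^{-1}$ is uniformly bounded for $m \neq n$ (so only the $e_n$-mode blows up) requires the separation of the 2D Newtonian eigenvalues, which is exactly what part (2) of \textbf{Hypotheses} \ref{hyp} and the appendix of \cite{2DGHANDRICHE} are invoked to supply.
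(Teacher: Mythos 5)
Your proposal follows essentially the same route as the paper: the decomposition $\Phi_k=\Phi_0+\bm{E}+\mathcal{O}(|x-y|^2\log|x-y|)$ forcing the constant $\bm{E}$ into the diagonal (hence $[C_j^{-1}-\bm{E}]^{-1}$ in $B_k$), the near-resonant spectral computation giving $C_j\sim|\log\delta|^{h-1}$ via \textbf{Hypotheses} \ref{hyp}, the reduction to the algebraic system $(I-B_k)Q=U+Err$, and Neumann-series invertibility with logarithmic bookkeeping. The only slight discrepancy is in attributing the $\mathcal{O}(\delta|\log\delta|^{h-1})$ remainder, which in the paper arises from the Taylor expansion of $\Phi_k(x,\cdot)$ in the far-field representation rather than from the self-interaction, but this does not affect the validity of the argument.
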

The previous theorem suggest the following corollary.
\begin{corollary}\label{coro15}
For any integer $N$, we define
\begin{equation*}
Q^{N} := \sum_{n=0}^{N} \, B_{k}^{n} \cdot U \quad \text{and} \quad u^{s,N}(x,\theta)  :=  \sum^M_{j=1} \Phi_{k}(x, z_{j}) \; \left[ C_{j}^{-1} \, - \, \bm{E} \right]^{-1} \; Q^N_j.
\end{equation*}

\begin{enumerate}

\item Under the following general condition, which is uniform in terms of $N < \infty$,
\begin{equation}\label{intro-condition-Foldy-Lax-proof-2D}
1 - t - h > 0
\end{equation}
 we have 

\begin{equation}\label{intro-N-interactions-proof-2D}
u^{s}(x,\theta) - u^{s,N}(x,\theta) =  \mathcal{O}\left( \left\vert \log(\delta) \right\vert^{(h-1) -(N+1)(1-t-h)} \right)
\end{equation}
and  
\begin{equation}\label{intro-N-interactions-scattered-field-2D}
u^{s,N}(x,\theta) - u^{s,N-1}(x,\theta) \sim \left\vert \log(\delta) \right\vert^{(h-1)-N(1-t-h)} \gg  \left\vert \log(\delta) \right\vert^{(h-1) -(N+1)(1-t-h)} 
\end{equation}
for in any bounded domain away from the collection of centers $z_{j}, j=1, \cdots, M$.
\bigskip

\item Under the limit condition $1-t-h=0$, we have the expansion
\begin{equation}\label{intro-infty-interactions-proof-2D}
u^{s}(x,\theta) - u^{s,\infty}(x,\theta) =  \mathcal{O}\left( \delta^{1-t}\; \left\vert \log(\delta)\right\vert^{-2 \, t} \right),\;  \delta \ll 1,
\end{equation}
where 
\begin{equation*}
u^{s,\infty}(x,\theta):= \sum^M_{j=1} \Phi_{k}(x, z_{j}) \; \left[ C_{j}^{-1} - \bm{E} \right]^{-1} \; Q^\infty_j \sim  \left\vert \log(\delta) \right\vert^{-t} 
\end{equation*}
for in any bounded domain away from the collection of centers $z_j, j=1, \cdots, M$. Here $u^{s,\infty}(x,\theta)$ is the field generated after all the interactions between the particles, i.e. $Q^\infty:=(Q_1^\infty, \cdots, Q_M^\infty)$ is given by 
\begin{equation*}
 Q^\infty=\sum^\infty_{n=0}B_{k}^{n} \cdot U.
\end{equation*} 
\end{enumerate}
\end{corollary}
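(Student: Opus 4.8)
The plan is to base everything on Theorem~\ref{2D-case} and on a quantitative control of the Neumann series of $(I-B_k)^{-1}$, the argument running in parallel with the three–dimensional Corollary~\ref{coro12}; the only structural novelty is that here the small quantity governing the multiple interactions is a negative power of $|\log(\delta)|$ rather than of $\delta$. Throughout I write $C_j^\star:=[C_j^{-1}-\bm E]^{-1}$, the coefficient occurring in the leading term of Theorem~\ref{2D-case} and in the definition of $B_k$. The first step is to record the orders of magnitude of the objects entering Theorem~\ref{2D-case}. Writing $C_j=\langle[\tfrac{a_0}{k^2b}I-A^0]^{-1}1,1\rangle_{L^2(D_j)}=\sum_m\frac{|\langle1,e_m\rangle|^2}{\frac{a_0}{k^2b}-\lambda_m}$ and using that, $k_n$ being the resonance of this operator and $k^2=k_n^2(1\pm|\log(\delta)|^{-h})$ by \eqref{omega-2D}, one has $\frac{a_0}{k^2b}-\lambda_n\sim\mp\,\lambda_n|\log(\delta)|^{-h}$, so the resonant mode $e_n$ dominates; Hypotheses~\ref{hyp} (namely $\langle1,e_n\rangle\neq0$, with $|\langle1,e_n\rangle|\sim\delta$ after the rescaling $D_j=z_j+\delta B_j$, and $\lambda_n\sim\delta^2|\log(\delta)|$) then gives $C_j=C_{0,j}\,|\log(\delta)|^{h-1}(1+o(1))$ with $C_{0,j}>0$, and hence $C_j^\star\sim C_j\sim|\log(\delta)|^{h-1}$ because $C_j^{-1}\sim|\log(\delta)|^{1-h}\to\infty$ while $\bm E=\mathcal O(1)$. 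Next, for $i\neq j$ we have $d\le|z_i-z_j|\le\mathcal O(1)$, so by \eqref{d02D} $\big|\log|z_i-z_j|\big|\le|\log(\delta)|^t+\mathcal O(1)$ and therefore $|\Phi_k(z_i,z_j)|\lesssim|\log(\delta)|^t$; since $M$ is fixed this yields $\Vert B_k\Vert\lesssim|\log(\delta)|^{t+h-1}=|\log(\delta)|^{-(1-t-h)}$. Together with the elementary $|\Phi_k(x,z_j)|\sim1$ for $x$ in a fixed bounded set at positive distance from $\{z_1,\dots,z_M\}$, these are all the ingredients.

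\emph{Part (1).} If $1-t-h>0$ then $\Vert B_k\Vert\to0$, so $I-B_k$ is boundedly invertible, $Q=\sum_{n\ge0}B_k^n\cdot U$ with $\Vert Q\Vert=\mathcal O(1)$ (recall $\Vert U\Vert\sim1$), and $Q-Q^N=B_k^{N+1}\cdot Q$, whence $\Vert Q-Q^N\Vert\lesssim\Vert B_k\Vert^{N+1}\lesssim|\log(\delta)|^{-(N+1)(1-t-h)}$. Subtracting the definitions of $u^{s}$ (Theorem~\ref{2D-case}) and of $u^{s,N}$,
\[
u^{s}(x,\theta)-u^{s,N}(x,\theta)=\sum_{j=1}^{M}\Phi_k(x,z_j)\,C_j^\star\,(Q_j-Q_j^N)+R ,
\]
where $R=\mathcal O(\delta^{1-t}|\log(\delta)|^{2(h-1)})+\mathcal O(\delta|\log(\delta)|^{h-1})$ is the remainder of Theorem~\ref{2D-case}. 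The sum is $\lesssim|\log(\delta)|^{h-1}\,|\log(\delta)|^{-(N+1)(1-t-h)}=|\log(\delta)|^{(h-1)-(N+1)(1-t-h)}$, whereas $R$ — a genuine power of $\delta$ times logarithms, with $1-t>h>0$ — is $o(|\log(\delta)|^{-p})$ for every $p$ and thus negligible; this proves \eqref{intro-N-interactions-proof-2D}, and it also explains why the hypothesis \eqref{intro-condition-Foldy-Lax-proof-2D} is merely $1-t-h>0$, uniformly in $N$, unlike the $3$D case where the corresponding remainder is itself a power of $\delta$ that must beat the truncation error. For the increment \eqref{intro-N-interactions-scattered-field-2D} one writes $Q^N-Q^{N-1}=B_k^N\cdot U$, so that $u^{s,N}-u^{s,N-1}=\sum_j\Phi_k(x,z_j)C_j^\star(B_k^N\cdot U)_j$ has size $\sim|\log(\delta)|^{h-1}\,\Vert B_k^N\cdot U\Vert$.

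\emph{The main obstacle} is the matching \emph{lower} bound $\Vert B_k^N\cdot U\Vert\gtrsim\Vert B_k\Vert^N$, i.e. that the Neumann iterates of $U$ do not accidentally cancel. I would extract it from the explicit leading form of $B_k$: modulo $o(1)$ corrections (in $|\log(\delta)|$) its $(i,j)$–entry equals $\tfrac1{2\pi}\big|\log|z_i-z_j|\big|\,C_{0,j}|\log(\delta)|^{h-1}$, that is, $|\log(\delta)|^{-(1-t-h)}$ times a fixed nonnegative, irreducible, distance–type matrix, against which $U$ (all components unimodular) is not annihilated, so no power $B_k^N$ can shrink $U$ by more than a constant. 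This gives $u^{s,N}-u^{s,N-1}\sim|\log(\delta)|^{(h-1)-N(1-t-h)}$, which dominates the right–hand side of \eqref{intro-N-interactions-proof-2D} precisely because $1-t-h>0$, establishing \eqref{intro-N-interactions-scattered-field-2D}.

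\emph{Part (2).} In the critical regime $1-t-h=0$ we have $h-1=-t$, hence $C_j^\star\sim|\log(\delta)|^{-t}$. Since $Q^\infty=\sum_{n\ge0}B_k^n\cdot U=(I-B_k)^{-1}\cdot U=Q$, the field $u^{s,\infty}$ coincides exactly with the leading term delivered by Theorem~\ref{2D-case} (whose hypothesis $t+h\le1$ covers this boundary case), so $u^{s}-u^{s,\infty}$ equals precisely that theorem's remainder, $\mathcal O(\delta^{1-t}|\log(\delta)|^{2(h-1)})+\mathcal O(\delta|\log(\delta)|^{h-1})=\mathcal O(\delta^{1-t}|\log(\delta)|^{-2t})$ (the first term dominating since $t\ge0$), which is \eqref{intro-infty-interactions-proof-2D}. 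Finally $u^{s,\infty}\sim|\log(\delta)|^{-t}$ follows from $|\Phi_k(x,z_j)|\sim1$, $C_j^\star\sim|\log(\delta)|^{-t}$ and $\Vert Q^\infty\Vert\sim1$; the last point needs $\Vert(I-B_k)^{-1}\Vert=\mathcal O(1)$, which in this limit comes from the sharpened bound $\Vert B_k\Vert\le c<1$, and it is here — if anywhere — that one would invoke a smallness constraint on the constant $C_0$, the two–dimensional analogue of the condition $C_0d_0<1$ appearing in the $3$D Corollary~\ref{coro12}.
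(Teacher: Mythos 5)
Your argument is correct and follows essentially the same route as the paper: Theorem \ref{2D-case} plus the Neumann series for $(I-B_k)^{-1}$, the bound $\Vert B_k\Vert\lesssim|\log(\delta)|^{t+h-1}$, and the scales $C^{\star}\sim C\sim|\log(\delta)|^{h-1}$. The two points where you go beyond the paper --- the lower bound $\Vert B_k^{N}\cdot U\Vert\gtrsim\Vert B_k\Vert^{N}$ needed for the asymptotic $\sim$ in \eqref{intro-N-interactions-scattered-field-2D}, and the smallness constant guaranteeing $\Vert B_k\Vert\le c<1$ in the critical case $1-t-h=0$ --- are simply asserted there, so flagging them is a genuine improvement, although your Perron--Frobenius-style sketch would still have to rule out cancellation of the unimodular vector $U$ against the dominant singular direction (nonnegativity and irreducibility alone do not preclude it).
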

We observe that, contrary to the $3$ dimensional case, the scattered field generated after $N$ interactions (for any order $N$), can be reconstructed from measured scattered field away from the cluster of particles. 
In addition, the whole Foldy-Lax field can be reconstructed as well. Observe also that, we can take $t=0$ (and then $h=1$), which means that the particles can be far away from each other and still we have the Foldy-Lax approximation (for $N\leq \infty$) if the incident frequency is close to the resonance with the (optimal value) $h=1$. This means that in the $2D$ case, the Foldy-Lax approximation is always valid. 
In the case $N< \infty$, we can also take $h=0$ in (\ref{intro-condition-Foldy-Lax-proof-2D}) and hence we do not need to use nearly resonating incident frequencies (this is not the case for $N=\infty$). As in (\ref{intro-condition-Foldy-Lax-proof-2D}), $h=0$ implies that $t=1$, therefore the Foldy-Lax approximation is valid but in the mesoscale regime, see (\ref{d02D}), i.e. $d \sim \delta$, if the incident frequency is not close to any resonance.
\newline
At the mathematical analysis level, in the $2$ D case, the singularity of the fundamental solution is of log-type and hence its powers stay always dominating the singularity of its derivatives. This is not true in the $3$ D case.  
\newline  
The results in Theorem \ref{2D-case} confirm, in particular, the ones in \cite{CASSIER} where in the 2D case, it is shown that the Foldy-Lax approximation is valid for sound-soft acoustic obstacles of discs shapes. 
\bigskip

Let us finish this introduction by mentioning that the Foldy-Lax approximations have different applications, see \cite{Martin:2006} for classical results and historical facts. This approximation is called after Foldy and Lax as the dominant part of these approximation is nothing but the field generated by point-like inhomogeneity as modeled by Foldy \cite{Foldy} and Lax \cite{Lax}. In addition, such approximations are useful in modern mathematical imaging see for instance \cite{Alsaedi_2016, A-A-C-K-S, 2DGHANDRICHE} and the effective medium theory 
\cite{CMS-2017,ACCS-effective-media}.

\section{Proof of Theorem \ref{3D-case}}
We start the proof by setting the Lippmann-Schwinger equation, for several particles, given for $x \in D_{m}$ by: 
\begin{equation*}
v_{m}(x) - k^2 \, \frac{1}{a_{0}} \, \tau \int_{D_{m}} \Phi_{k}(x,y) v_{m}(y) dy - k^{2} \, \frac{1}{a_{0}} \, \tau \sum_{j \neq m} \int_{D_{j}} \Phi_{k}(x,y) v_{j}(y) dy = u^{i}(x),
\end{equation*}
where we recall that\footnote{To avoid more complicated notations and simplify the exposition, we assume that $\tau_{j} := b - b_{0}(z_{j}) = \tau, \,\, j=1,\cdots,M.$ The final results stay valid without this assumption.} $\tau:= b - b_{0}(z) \sim \delta^{-2}$ and $v_{m}(\cdot)=u(\cdot)_{|_{D_{m}}}$. To make appear the Newtonian operator, we rewrite the previous equation as:
\begin{eqnarray*}
v_{m}(x) &-& k^2 \, \frac{1}{a_{0}} \, \tau \, \int_{D_{m}} \Phi_{0}(x,y) \, v_{m}(y) \, dy - \, k^{2} \, \frac{1}{a_{0}} \, \tau \, \sum_{j \neq m} \int_{D_{j}} \Phi_{k}(x,y)\, v_{j}(y)\, dy \\ &=& u^{i}(x)  + k^{2} \, \frac{1}{a_{0}} \, \tau \,  \int_{D_{m}} (\Phi_{k}-\Phi_{0})(x,y) \, v_{m}(y) \, dy.
\end{eqnarray*}
By expanding, using Taylor expansion, the functions $\Phi_{k}(\cdot,\cdot)$ and $u^{i}(\cdot)$ near the centers and using the series representation of $(\Phi_{k} - \Phi_{0})$, we obtain: 
\begin{eqnarray*}
\left[I - k^{2} \, \frac{1}{a_{0}} \, \tau \, A^{0} \right] v_{m}(x) &-&  \, k^{2} \, \frac{1}{a_{0}} \, \tau \, \sum_{j \neq m}   \Phi_{k}(z_{m},z_{j}) \, \int_{D_{j}} \, v_{j}(y)\, dy \\ &-& \, k^{2} \, \frac{1}{a_{0}} \, \tau \, \sum_{j \neq m} \int_{D_{j}} \,  \int_{0}^{1}\underset{x}{\nabla} \Phi_{k}(z_{m}+t(x-z_{m}),z_{j})\cdot(x - z_{m}) \, dt  \, v_{j}(y)\, dy \\ &-& \, k^{2} \, \frac{1}{a_{0}} \, \tau \, \sum_{j \neq m} \int_{D_{j}}  \int_{0}^{1} \underset{y}{\nabla} \Phi_{k}(x,z_{j}+t(y-z_{j})) \cdot (y-z_{j}) \, dt  \, v_{j}(y)\, dy \\ &=& u^{i}(z_{m}) + \int_{0}^{1} \nabla u^{i}(z_{m}+t(x-z_{m})) \, \cdot (x-z_{m}) \, dt \\ &+& k^{2} \, \frac{1}{a_{0}} \, \tau \,  \int_{D_{m}} \left[ i k +\sum_{\ell \geq 1} \frac{(i \, k)^{\ell + 1}}{(\ell + 1)!} \, \left\vert x-y \right\vert^{\ell} \right]  \, v_{m}(y) \, dy.
\end{eqnarray*}
We set the scattering coefficient $w$ to be: 
\begin{equation*}
w := k^{2} \, \frac{1}{a_{0}} \, \tau \, \left[ I - k^2 \, \frac{1}{a_{0}} \, \tau \, A^{0} \right]^{-1}(1).
\end{equation*}
On both sides of the previous equation, we take the inverse operator of $\left[ I - k^2 \, \frac{1}{a_{0}} \, \tau \, A^{0} \right]$ and then integrating over $D_{m}$ to obtain: 
\begin{eqnarray}\label{AlgebraicSystem}
\nonumber
\left[ 1 - i k \int_{D_{m}} w(x) \, dx \right] \, \int_{D_{m}} v_{m}(x) \, dx & = & \frac{a_{0}}{k^{2}  \, \tau} u^{i}(z_{m}) \int_{D_{m}} w(x) \, dx  \\ \nonumber
&+& \sum_{j \neq m} \Phi_{k}(z_{m},z_{j}) \int_{D_{j}} v_{j}(y) \, dy \, \int_{D_{m}} w(x) \, dx \\
&+& Remainder,
\end{eqnarray}
where the remainder term is defined by: 
\begin{eqnarray}\label{Remainder}
\nonumber
Remainder & := & \frac{a_{0}}{k^{2} \, \tau} \, \int_{D_{m}} w(x) \, \int_{0}^{1} \nabla u^{i}(z_{m}+t(x-z_{m})) \centerdot (x-z_{m}) \, dt \, dx \\ \nonumber
&+& \int_{D_{m}} w(x) \int_{D_{m}} \sum_{\ell \geq 1} \frac{(ik)^{\ell + 1}}{(\ell + 1)!} \, \left\vert x-y \right\vert^{\ell} v_{m}(y) \, dy \, dx \\ \nonumber
&+&\int_{D_{m}} w(x) \sum_{j \neq m} \int_{D_{j}} \, \int_{0}^{1} \underset{x}{\nabla} \Phi_{k}(z_{m}+t(x-z_{m});z_{j}) \cdot (x-z_{m}) \, dt \, v_{j}(y) \; dy \; dx \\
&+& \int_{D_{m}} w(x) \sum_{j \neq m} \int_{D_{j}} \, \int_{0}^{1} \underset{y}{\nabla} \Phi_{k}(x;z_{j}+t(y-z_{j})) \cdot (y - z_{j}) \, dt \, v_{j}(y) \; dy \; dx.
\end{eqnarray}
In the sequel, we will derive an estimation for  $Remainder$. For this, we need the following proposition.
\begin{proposition}\label{aprioriestimateLemma}
The total field, in the presence of particles, can be estimated by the source field via the following inequality:
\begin{equation}\label{aprioriestimation3D}
\Vert v \Vert \leq \delta^{-h} \, \Vert u \Vert.
\end{equation}
The scattering coefficient satisfies the following estimations: 
\begin{equation}
C_{m} := \int_{D_{m}} w(x) \, dx = \mathcal{O}\left( \delta^{1-h} \right) \quad \text{and} \quad \left\Vert w \right\Vert_{\mathbb{L}^{2}(D_{m})} = \mathcal{O}\left( \delta^{-\frac{1}{2}-h} \right).
\end{equation}
\end{proposition}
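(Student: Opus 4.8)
The plan to prove Proposition~\ref{aprioriestimateLemma} is to isolate one sharp resolvent estimate on a single particle — that $[I-\mu A^{k}_{m}]^{-1}$, with $\mu:=k^{2}a_{0}^{-1}\tau$, is bounded on $\mathbb{L}^{2}(D_{m})$ by $\mathcal{O}(\delta^{-h})$ — and then to read off the bounds on $w$ and $C_{m}$ as immediate consequences and to propagate the estimate through the coupled Lippmann--Schwinger system. So the first thing I would do is work on a single particle $D_{m}$, where everything is governed by the spectrum of the Newtonian operator.

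The operator $A^{0}_{m}$ is self-adjoint, compact and positive on $\mathbb{L}^{2}(D_{m})$ (the Coulomb kernel is positive definite), hence has a non-increasing sequence of positive eigenvalues $\lambda_{p}=\delta^{2}\tilde{\lambda}_{p}$ — the scaling $x\mapsto z_{m}+\delta x$ makes $A^{0}_{m}$ unitarily equivalent to $\delta^{2}\widetilde{A^{0}}$ on $\mathbb{L}^{2}(B_{m})$, so the $\tilde{\lambda}_{p}$ are $\delta$-independent — with orthonormal eigenbasis $(e_{p})$. Since $k^{2}\mapsto 1-\mu\,\lambda_{n}$ is affine in $k^{2}$, equals $1$ at $k=0$, and vanishes at $k=k_{n}$ by the very definition of the resonance, it equals $\mp\delta^{h}$ when $k^{2}=k_{n}^{2}(1\pm\delta^{h})$; thus the spectrum of $I-\mu A^{0}_{m}$ is the single number $\mp\delta^{h}$ (on the $\lambda_{n}$-eigenspace) together with the numbers $1-(1\pm\delta^{h})\tilde{\lambda}_{p}/\tilde{\lambda}_{n}$, $p\neq n$, which are bounded away from $0$ by an order-one constant $c_{0}$ as soon as $\tilde{\lambda}_{n}$ is an isolated point of the (discrete) spectrum of the fixed operator $\widetilde{A^{0}}$ — automatic for the principal eigenvalue by Krein--Rutman, which also gives $\int_{B}\tilde{e}_{n}\neq 0$. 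By the spectral theorem $\|[I-\mu A^{0}_{m}]^{-1}\|_{\mathbb{L}^{2}(D_{m})}=\mathcal{O}(\delta^{-h})$. Passing to $A^{k}_{m}$: the kernel of $A^{k}_{m}-A^{0}_{m}$ is bounded (by $\approx k/4\pi$, using $|e^{is}-1|\le|s|$), so $\|\mu(A^{k}_{m}-A^{0}_{m})\|_{\mathbb{L}^{2}(D_{m})}\le\mu\,\mathcal{O}(|D_{m}|)=\mathcal{O}(\delta^{-2})\mathcal{O}(\delta^{3})=\mathcal{O}(\delta)$, which is far below the order-one gap; eigenvalue perturbation theory then gives that $I-\mu A^{k}_{m}$ (compact, complex-symmetric but not self-adjoint) has exactly one eigenvalue within $\mathcal{O}(\delta)$ of $\mp\delta^{h}$ — to leading order $\mp\delta^{h}-i\,c_{\ast}\delta$ with $c_{\ast}>0$ coming from the purely imaginary leading term $\tfrac{ik}{4\pi}\int_{D_{m}}(\cdot)$ of $A^{k}_{m}-A^{0}_{m}$, whose modulus is still $\gtrsim\delta^{h}$ because $h\le 1$ — with $\mathcal{O}(1)$ resolvent off it. Hence $\|[I-\mu A^{k}_{m}]^{-1}\|_{\mathbb{L}^{2}(D_{m})}=\mathcal{O}(\delta^{-h})$.

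From here the scattering-coefficient bounds are immediate: $w=\mu[I-\mu A^{0}_{m}]^{-1}(1)$ and $\|1\|_{\mathbb{L}^{2}(D_{m})}=|D_{m}|^{1/2}=\mathcal{O}(\delta^{3/2})$, so $\|w\|_{\mathbb{L}^{2}(D_{m})}\le\mu\,\mathcal{O}(\delta^{-h})\,\mathcal{O}(\delta^{3/2})=\mathcal{O}(\delta^{-1/2-h})$, and by Cauchy--Schwarz $|C_{m}|=|\int_{D_{m}}w|\le|D_{m}|^{1/2}\|w\|_{\mathbb{L}^{2}(D_{m})}=\mathcal{O}(\delta^{1-h})$. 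For the a priori estimate I would write the coupled system on each $D_{m}$ as $[I-\mu A^{k}_{m}]v_{m}=u^{i}+\mu\sum_{j\neq m}\int_{D_{j}}\Phi_{k}(\cdot,y)v_{j}(y)\,dy$, apply $[I-\mu A^{k}_{m}]^{-1}$, and view the outcome as a fixed point $v=\mathcal{T}u^{i}+\mathcal{K}v$ on $\bigoplus_{m}\mathbb{L}^{2}(D_{m})$ with $\|\mathcal{T}\|=\mathcal{O}(\delta^{-h})$. Expanding $\Phi_{k}(\cdot,y)$ about the centers, the dominant part of the off-diagonal block $(m,j)$ is the rank-one map $v_{j}\mapsto\mu\,\Phi_{k}(z_{m},z_{j})\big(\int_{D_{j}}v_{j}\big)[I-\mu A^{k}_{m}]^{-1}(1)$, of norm $\le\mu\,|\Phi_{k}(z_{m},z_{j})|\,|D_{j}|^{1/2}\,\|[I-\mu A^{k}_{m}]^{-1}(1)\|=\mathcal{O}(\delta^{-2})\mathcal{O}(\delta^{-t})\mathcal{O}(\delta^{3/2})\mathcal{O}(\delta^{3/2-h})=\mathcal{O}(\delta^{1-t-h})$, using $|\Phi_{k}(z_{m},z_{j})|\le C/d=\mathcal{O}(\delta^{-t})$ and a Schur-type summation over the $d_{0}\delta^{t}$-separated centers; under $t+h\le 1$ this yields $\|\mathcal{K}\|\le 1$ (and $<1$ for $\delta$ small when $t+h<1$; in the critical case $t+h=1$ the extra smallness $C_{0}d_{0}<1$ closes the geometric series). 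A Neumann series then gives $\|v\|_{\mathbb{L}^{2}(D)}\lesssim\delta^{-h}\|u^{i}\|_{\mathbb{L}^{2}(D)}$, which is the asserted estimate.

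The main obstacle is the single-particle resolvent bound for $I-\mu A^{k}_{m}$, with two delicate points. First, the \emph{uniform-in-$\delta$} spectral gap $c_{0}$ for $I-\mu A^{0}_{m}$: by scaling this reduces to an isolatedness property of the fixed operator $\widetilde{A^{0}}$, which is exactly where a genuine requirement on the excited eigenvalue enters — it is free for the principal eigenvalue, and for a general $\lambda_{n}$ plays the role of \textbf{Hypotheses}~\ref{hyp} in the $2$D case. Second, the perturbation $A^{k}_{m}-A^{0}_{m}$ is tiny in operator norm but, because $\mu\sim\delta^{-2}$ is large, is \emph{not} negligible against the eigenvalue $\mp\delta^{h}$ when $h$ is close to $1$, so one cannot simply Neumann-invert around $I-\mu A^{0}_{m}$: one must follow how the small eigenvalue moves and check its modulus stays of order $\delta^{h}$, which is precisely where $h\le 1$ is used. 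The propagation to the coupled system is then softer but still needs the bookkeeping that the large prefactor $\mu\sim\delta^{-2}$ is exactly compensated by the particle volumes and the separation $d_{0}\delta^{t}$, producing the per-interaction gain $\delta^{1-t-h}\le 1$.
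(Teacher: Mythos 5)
Your proposal is correct and its overall strategy — a single\--particle spectral estimate near the nearly\--resonant frequency, then propagation through the coupled Lippmann--Schwinger system — is the same as the paper's, but the two key technical steps are executed differently. For the bounds on $C_{m}$ and $\Vert w\Vert_{\mathbb{L}^{2}(D_{m})}$, the paper expands $w$ in the eigenbasis of $A^{0}$, isolates the resonant term $\delta^{1-h}\left(\langle 1;\overline{e}_{n_{0}}\rangle_{\mathbb{L}^{2}(B)}\right)^{2}$, and verifies convergence of the non\--resonant series via an auxiliary resolvent at a shifted spectral parameter $\beta$; you instead use only the operator\--norm bound $\Vert [I-\mu A^{0}_{m}]^{-1}\Vert=\mathcal{O}(\delta^{-h})$ together with $\Vert 1\Vert_{\mathbb{L}^{2}(D_{m})}=\mathcal{O}(\delta^{3/2})$ and Cauchy--Schwarz. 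Your route is shorter and suffices for the $\mathcal{O}$\--statements of the proposition, but it only gives upper bounds: the paper's expansion additionally yields the two\--sided estimate $C_{m}\sim\delta^{1-h}$ (under the non\--vanishing of $\int_{B}\overline{e}_{n_{0}}$), which is what is actually invoked downstream (e.g.\ in the normalization $C=C_{0}\delta^{1-h}$ and in the lower bounds $u^{s,N}-u^{s,N-1}\sim\dots$). For the a priori estimate, the paper never inverts $I-\mu A^{k}_{m}$: it keeps the resolvent of the self\--adjoint $I-\mu A^{0}_{m}$ and puts $\mu(A^{k}_{m}-A^{0}_{m})v_{m}$ on the right\--hand side as a term of size $\delta^{1-h}\Vert v_{m}\Vert$ to be absorbed (which, strictly speaking, only closes for $h<1$, the hypothesis under which the paper concludes); your non\--self\--adjoint perturbation argument, tracking the migration of the small eigenvalue to $\mp\delta^{h}-i c_{*}\delta$, is heavier machinery but is the more robust option and in particular covers the limit case $h=1$ that the paper's absorption does not. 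Both treatments of the off\--diagonal coupling reduce to the same per\--interaction gain $\delta^{1-t-h}$ (your bookkeeping keeps the $d^{-1}$ factor explicitly, which the paper's displayed inequality silently drops), and both need the strict smallness $C_{0}d_{0}<1$ in the critical case $t+h=1$.
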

\begin{Proof}
For the proof of this proposition, we refer the readers to Appendix \ref{appendixproposition}. \QEDB
\end{Proof}
We split $(\ref{Remainder})$ into four terms that can be estimated separately.
\bigskip
\begin{enumerate}
\item[$\ast$] Estimation of:
\begin{eqnarray}\label{estimationT1}
\nonumber 
T_{1} & := & \frac{ a_{0} }{k^{2}  \, \tau} \, \int_{D_{m}} w(x) \, \int_{0}^{1} \nabla u^{i}(z_{m}+t(x-z_{m})) \centerdot (x-z_{m}) \, dt \, dx \\ \nonumber
\left\vert T_{1} \right\vert & \lesssim &  \tau^{-1} \, \left\vert \int_{D_{m}} w(x) \, \int_{0}^{1} \nabla u^{i}(z_{m}+t(x-z_{m})) \centerdot (x-z_{m}) \, dt \, dx \right\vert \\ \nonumber
& \lesssim & \tau^{-1} \, \left\Vert w \right\Vert_{\mathbb{L}^{2}(D_{m})} \, \left\Vert \int_{0}^{1} \nabla u^{i}(z_{m}+t(\cdot -z_{m})) \centerdot (\cdot - z_{m}) \, dt \right\Vert_{\mathbb{L}^{2}(D_{m})} \\ &=& \mathcal{O}\left( \delta^{\frac{9}{2}} \,\; \parallel w  \parallel_{\mathbb{L}^{2}(D_{m})} \right), 
\end{eqnarray}
where the last estimation can be justified using the smoothness of the incident field, $u^{i}(\cdot)$, and the fact that $\tau \sim \delta^{-2}$.
\item[]
\item[$\ast$] Estimation of:
\begin{eqnarray}\label{estimationT2}
\nonumber
T_{2} &:=& \int_{D_{m}} w(x) \int_{D_{m}} \sum_{\ell \geq 1} \frac{(ik)^{\ell + 1}}{(\ell + 1)!} \, \left\vert x-y \right\vert^{\ell} v_{m}(y) \, dy \, dx \\ \nonumber
\left\vert T_{2} \right\vert & \leq & \left\Vert  w \right\Vert_{\mathbb{L}^{2}(D_{m})} \,\, \left\Vert \int_{D_{m}} \sum_{\ell \geq 1} \frac{(ik)^{\ell + 1}}{(\ell + 1)!} \, \left\vert \cdot - y \right\vert^{\ell} v_{m}(y) \, dy \right\Vert_{\mathbb{L}^{2}(D_{m})} \\
&=& \mathcal{O}\left( \delta^{4} \, \left\Vert  w \right\Vert_{\mathbb{L}^{2}(D_{m})} \, \left\Vert  v_{m} \right\Vert_{\mathbb{L}^{2}(D_{m})}\right) \overset{(\ref{aprioriestimation3D})}{=} \mathcal{O}\left( \delta^{\frac{11}{2}-h} \, \left\Vert  w \right\Vert_{\mathbb{L}^{2}(D_{m})}  \right).
\end{eqnarray}
\item[]
\item[$\ast$] Estimation of:
\begin{eqnarray}\label{estimationT3}
\nonumber
T_{3} &:=& \int_{D_{m}} w(x) \sum_{j \neq m} \int_{D_{j}} \, \int_{0}^{1} \underset{x}{\nabla} \Phi_{k}(z_{m}+t(x-z_{m});z_{j}) \cdot (x-z_{m}) \, dt \, v_{j}(y) \; dy \; dx \\ \nonumber
\left\vert T_{3} \right\vert & \leq & \left\Vert w \right\Vert_{\mathbb{L}^{2}(D_{m})} \, \sum_{j \neq m} \left\Vert \int_{D_{j}} \, \int_{0}^{1} \underset{x}{\nabla} \Phi_{k}(z_{m}+t(\cdot -z_{m});z_{j}) \cdot (\cdot -z_{m}) \, dt \, v_{j}(y) \; dy \right\Vert_{\mathbb{L}^{2}(D_{m})} \\ \nonumber
& \lesssim & \left\Vert w \right\Vert_{\mathbb{L}^{2}(D_{m})} \, \delta^{4} \, \sum_{j \neq m} \frac{1}{\left\vert z_{m} - z_{j} \right\vert^{2}} \,\, \left\Vert v_{j}\right\Vert_{\mathbb{L}^{2}(D_{m})} \\ \nonumber
& \leq & \left\Vert w \right\Vert_{\mathbb{L}^{2}(D_{m})} \, \delta^{4} \, \left[ \sum_{j \neq m} \frac{1}{\left\vert z_{m} - z_{j} \right\vert^{4}} \right]^{\frac{1}{2}} \,\, \left[ \sum_{j \neq m}  \left\Vert v_{j}\right\Vert^{2}_{\mathbb{L}^{2}(D_{m})} \right]^{\frac{1}{2}} \\
& \leq & \left\Vert w \right\Vert_{\mathbb{L}^{2}(D_{m})} \, \delta^{4} \, d^{-2} \,\, \left\Vert v \right\Vert_{\mathbb{L}^{2}(D)} \overset{(\ref{aprioriestimation3D})}{=} \mathcal{O}\left( \left\Vert w \right\Vert_{\mathbb{L}^{2}(D_{m})} \, \delta^{\frac{11}{2}-2t-h} \right).
\end{eqnarray}
\item[]
\item[$\ast$] Estimation of:
\begin{equation*}
T_{4} := \int_{D_{m}} w(x) \sum_{j \neq m} \int_{D_{j}} \, \int_{0}^{1} \underset{y}{\nabla} \Phi_{k}(x;z_{j}+t(y-z_{j})) \cdot (y - z_{j}) \, dt \, v_{j}(y) \; dy \; dx.
\end{equation*}
Because the expression of $T_{4}$ is similar to the one of $T_{3}$ and to avoid the redundancy of computations, we skip the computation steps for its estimation and we deduce that:
\begin{equation}\label{estimationT4}
T_{4} = \mathcal{O}\left( \left\Vert w \right\Vert_{\mathbb{L}^{2}(D_{m})} \, \delta^{4} \, d^{-2} \,\, \left\Vert v \right\Vert_{\mathbb{L}^{2}(D)} \right) \overset{(\ref{aprioriestimation3D})}{=} \mathcal{O}\left( \left\Vert w \right\Vert_{\mathbb{L}^{2}(D_{m})} \, \delta^{\frac{11}{2}-2t-h} \right).
\end{equation}
\end{enumerate}
Then, by gathering $(\ref{estimationT1})-(\ref{estimationT4})$, the remainder term given by  $(\ref{Remainder})$ will be estimated as: 
\begin{equation}\label{estimationRemainder}
Remainder = \mathcal{O}\left( \left\Vert w \right\Vert_{\mathbb{L}^{2}(D_{m})} \, \delta^{\min(\frac{11}{2}-2t-h;\frac{9}{2})} \right) \overset{(\ref{estimationnormw})}{=} \mathcal{O}\left(\delta^{4-h+\min(1-2t-h;0)} \right).
\end{equation}
Finally, using the estimation $(\ref{estimationRemainder})$, the algebraic system given by $(\ref{AlgebraicSystem})$ becomes: 
\begin{eqnarray*}
\left[ 1 - i \, k \, \int_{D_{m}} w(x) \, dx \right] \; \int_{D_{m}} v_{m}(x) dx  &=&  \frac{a_{0}}{k^{2} \, \tau} u^{i}(z_{m}) \int_{D_{m}} w(x) \, dx \\ &+& \int_{D_{m}} w(x) \, dx \, \sum_{j \neq m} \Phi_{k}(z_{m},z_{j}) \int_{D_{j}} v_{j}(y) \, dy \\ &+& \mathcal{O}\left(\delta^{4-h+\min(1-2t-h;0)} \right).
\end{eqnarray*}
To write short the previous system we set, for $j=1,\cdots,M$, the following notations:
\begin{equation*}
C_{j} := \int_{D_{j}} w(x) \, dx, \,\, Q_{j} := k^{2} \, \frac{1}{a_{0}} \, \tau \, C_{j}^{-1} \, \int_{D_{j}} v_{j}(x) \; dx \quad \text{and}  \quad  U^{i}(z_{j}) := \left[ 1 - \, i \, k \, C_{j} \right]^{-1} u^{i}(z_{j}). 
\end{equation*}
Then, the previous system will be reduced to: 
\begin{equation}\label{SA3D}
Q_{m} - \left( 1 - i \, k \, C_{m} \right)^{-1} \, \sum_{j \neq m} \Phi_{k}(z_{m};z_{j}) \, C_{j} \, Q_{j} \, = U^{i}(z_{m}) + \, Error_{m}, 
\end{equation}
where 
\begin{equation}\label{Errorm}
Error_{m} = \mathcal{O}\left(  \delta^{\min(2-2t-h;1)} \right).
\end{equation}
The matrix representation of the last algebraic system, see $(\ref{SA3D})$, is as follows:
\begin{equation}\label{sysper}
\left( I - B_{k} \right) \; \overline{Q} \; = \; \overline{U} \; + \; \overline{Err},
\end{equation}
where 
\begin{equation*}
B_{k} := \begin{pmatrix} 
0 & \displaystyle\frac{C_{2} \Phi_{k}(z_{1};z_{2})}{(1-ikC_{1})}  & \cdots &   \displaystyle\frac{C_{M} \Phi_{k}(z_{1};z_{M})}{(1-ikC_{1})} \\
 \displaystyle\frac{C_{1} \Phi_{k}(z_{2};z_{1})}{(1-ikC_{2})} & 0 & \cdots &  \displaystyle\frac{C_{M} \Phi_{k}(z_{2};z_{M})}{(1-ikC_{2})}
\\
\vdots & \vdots & \ddots & \vdots \\
 \displaystyle\frac{C_{1} \Phi_{k}(z_{M};z_{1})}{(1-ikC_{M})} &  \displaystyle\frac{C_{2} \Phi_{k}(z_{M};z_{2})}{(1-ikC_{M})} & \cdots  & 0
\end{pmatrix} 
\end{equation*}
and 
\begin{equation*}
\overline{Q} := \left( Q_{j} \right)_{j=1}^{M}, 
\,\, \overline{U} := \left( U^{i}(z_{j}) \right)_{j=1}^{M}
\quad \text{and}  \quad
\overline{Err} := \left( Error_{j} \right)_{j=1}^{M}.
\end{equation*}
To investigate the invertibility of $(\ref{sysper})$, we start first by proving the invertibility of the corresponding non perturbed linear system, given by: 
\begin{equation}\label{sys}
\left( I - B_{k} \right) \cdot \tilde{Q} \; = \;  \overline{U},
\end{equation}
and then to justify that the difference, in the norm sense, between the solution of the perturbed and the unperturbed problem is at most of the same order as $\overline{Err}$. 

To invert the algebraic system using the Born series, we must check under what conditions $\Vert B_{k} \Vert < 1 $.
\begin{equation*}
\Vert B_{k} \Vert  :=  \max_{i=1,\cdots, M} \left\lbrace \sum_{j \neq i}  \vert  C_{j} \vert \; \frac{\vert \Phi_{k}(z_{i};z_{j})\vert}{\vert 1-ikC_{i} \vert} \right\rbrace 
  =  \max_{i=1,\cdots, M} \left\lbrace \vert  C_{j} \vert \, \sum_{j \neq i}  \frac{1}{\vert z_{i}-z_{j} \vert \, \vert 1-ikC_{i} \vert} \right\rbrace. 
\end{equation*}
As we know, from $(\ref{C=intW})$, that $C_{j} = \mathcal{O}\left( \delta^{1-h} \right), \; j=1,\cdots,M$, we approximate the previous formula by:     
\begin{equation}\label{normBk}
\Vert B_{k} \Vert =  \delta^{1-h} \; \sum_{j \neq i}  \; \frac{1}{\vert z_{i}-z_{j} \vert} = \mathcal{O}\left(\delta^{1-h} \; d^{-1} \right) = \mathcal{O}\left( \delta^{1-h-t} \right).
\end{equation} 
Hence, we deduce that:
\begin{equation}\label{constantlesssmall}
\Vert B_{k} \Vert < 1 \Leftrightarrow 1-h-t >0.
\end{equation}
Now for the critical case, i.e. $1-h-t = 0$, we put the following assumptions: 
\begin{equation*}
\sum_{j \neq i}  \; \frac{1}{\vert z_{i}-z_{j} \vert} = d_{0} \;\, d^{-1} = d_{0} \, \delta^{-t} \quad \text{and} \quad C_{j} =  C_{0} \;\, \delta^{1-h}. 
\end{equation*}
Under these assumptions and the condition $1-h-t = 0$, we deduce that:  
\begin{equation}\label{constantless1}
\Vert B_{k} \Vert < 1 \Leftrightarrow C_{0} \, d_{0} \, < 1.
\end{equation}

We have justified that under the conditions $(\ref{constantlesssmall})$ or $(\ref{constantless1})$, for the critical case, the invertibility of the unperturbed system, see $(\ref{sys})$. 
\bigskip
\newline
Now, under the conditions of invertibility, by subtracting $(\ref{sysper})$ from $(\ref{sys})$ we obtain: 
\begin{equation*}
\overline{Q} - \tilde{Q} = \left( I - B_{k} \right)^{-1} \; \overline{Err},
\end{equation*}
and by taking the Euclidian norm both side of the equality, we get: 
\begin{equation}\label{unper-per}
\left\Vert \overline{Q} - \tilde{Q} \right\Vert = \left\Vert \left( I - B_{k} \right)^{-1} \; \overline{Err} \right\Vert \lesssim  \left\Vert \overline{Err} \right\Vert := \left[ \sum_{m=1}^{M} \left\vert Error_{m} \right\vert^{2} \right]^{\frac{1}{2}} \overset{(\ref{Errorm})}{=} \mathcal{O}\left(  \delta^{\min(2-2t-h;1)} \right). 
\end{equation}
To prove $(\ref{intro-usca})$ we will compute an approximation for the scattered field. More precisely, we start by setting the following L.S.E:
\begin{equation*}
u(x) - k^{2} \, \frac{1}{a_{0}} \, \tau \, \int_{D} \Phi_{k}(x,y) \, u(y) \, dy = u^{i}(x),  
\end{equation*} 
which can be rewritten, knowing that $u(\cdot) = u^{s}(\cdot) + u^{i}(\cdot)$ , as:
\begin{equation*}
u^{s}(x) = k^{2} \, \frac{1}{a_{0}} \, \tau \, \int_{D} \Phi_{k}(x,y) \, u(y) \, dy = k^{2} \, \frac{1}{a_{0}} \, \tau \, \sum_{m=1}^{M} \, \int_{D_{m}} \Phi_{k}(x,y) \, v_{m}(y) \, dy,  
\end{equation*}
where $v_{m}(\cdot) = u(\cdot)_{|_{D_{m}}}$, and we have assumed that $x$ is away from $D = \underset{m=1}{\overset{M}{\cup}} D_{m}$. Then, 
using Taylor expansion for the function $\Phi_{k}(x,\cdot)$ we obtain:    
\begin{eqnarray*}
u^{s}(x,\theta) & = & k^{2} \, \frac{1}{a_{0}} \,  \tau \sum_{m=1}^{M} \Phi_{k}(x,z_{m}) \; \int_{D_{m}} v_{m}(y) \, dy \\ &+& k^{2} \, \frac{1}{a_{0}} \, \tau \, \sum_{m=1}^{M} \, \int_{D_{m}}  \int_{0}^{1}  \underset{y}{\nabla} \Phi_{k}(x,z_{m}+t(y-z_{m})) \cdot \left(y - z_{m} \right) dt  \, v_{m}(y) \, dy.
\end{eqnarray*}
The second term on the right hand side can be estimated as: 
\begin{eqnarray*}
I_{1} &:=& k^{2} \, \frac{1}{a_{0}} \, \tau \, \sum_{m=1}^{M} \, \int_{D_{m}}  \int_{0}^{1}  \underset{y}{\nabla} \Phi_{k}(x,z_{m}+t(y-z_{m})) \cdot \left(y - z_{m} \right) dt  \, v_{m}(y) \, dy \\
\left\vert I_{1} \right\vert & \lesssim & \delta^{-2} \, \sum_{m=1}^{M} \left\vert \int_{D_{m}}  \int_{0}^{1}  \underset{y}{\nabla} \Phi_{k}(x,z_{m}+t(y-z_{m})) \cdot \left(y - z_{m} \right) dt  \, v_{m}(y) \, dy \right\vert \\
& \lesssim & \delta^{-2} \, \sum_{m=1}^{M} \left\Vert   \int_{0}^{1}  \underset{y}{\nabla} \Phi_{k}(x,z_{m}+t(\cdot -z_{m})) \cdot \left(\cdot - z_{m} \right) dt \right\Vert_{\mathbb{L}^{2}(D_{m})} \, \left\Vert v_{m} \right\Vert_{\mathbb{L}^{2}(D_{m})} \\
& \lesssim & \delta^{\frac{1}{2}} \, \sum_{m=1}^{M}  \left\Vert v_{m} \right\Vert_{\mathbb{L}^{2}(D_{m})} \leq  \delta^{\frac{1}{2}} \, M^{\frac{1}{2}} \, \left( \sum_{m=1}^{M}  \left\Vert v_{m} \right\Vert^{2}_{\mathbb{L}^{2}(D_{m})} \right)^{\frac{1}{2}} = \mathcal{O}\left( \delta^{\frac{1}{2}} \,  \left\Vert v \right\Vert_{\mathbb{L}^{2}(D)} \right).
\end{eqnarray*}
Then,  
\begin{equation*}
I_{1} = \mathcal{O}\left( \delta^{\frac{1}{2}} \,  \left\Vert v \right\Vert_{\mathbb{L}^{2}(D)} \right) \overset{(\ref{aprioriestimation3D})}{=} \mathcal{O}\left( \delta^{2-h} \,  \right).
\end{equation*}
\begin{eqnarray*}
u^{s}(x,\theta) & = &  k^{2} \, \frac{1}{a_{0}} \, \tau \sum_{m=1}^{M} \Phi_{k}(x,z_{m}) \; \int_{D_{m}} v_{m}(y) \, dy  + \mathcal{O}\left( \delta^{2-h} \right) \\ &=& \sum_{m=1}^{M} \Phi_{k}(x,z_{m}) \; C_{m} \; Q_{m} \, + \mathcal{O}\left( \delta^{2-h} \right) \\  
& = &  \langle \Phi_{k}(x,z) \; C ; \overline{Q} \rangle +  \mathcal{O}\left( \delta^{2-h} \right) \\ 
& = & \langle \Phi_{k}(x,z) \; C ; \tilde{Q} \rangle + \langle \Phi_{k}(x,z) \; C ; ( \overline{Q} - \tilde{Q} ) \rangle +  \mathcal{O}\left(\delta^{2-h} \right). 
\end{eqnarray*}
For the second term on the right hand side, we have: 
\begin{eqnarray*}
I_{2} & := & \langle \Phi_{k}(x,z) \; C ; ( \overline{Q} - \tilde{Q} ) \rangle \\
\left\vert I_{2} \right\vert & \leq & \left\Vert  \Phi_{k}(x,z) \; C \right\Vert \; \left\Vert \overline{Q} - \tilde{Q} \right\Vert \overset{(\ref{unper-per})}{\lesssim} \left\Vert  C \right\Vert \; \delta^{\min(2-2t-h;1)}.
\end{eqnarray*}
Thanks to $(\ref{C=intW})$, which gives us an estimation for the scattering coefficient $C_{m} \sim \delta^{1-h}$, we obtain: 
\begin{equation*}
\left\vert I_{2} \right\vert \lesssim M^{\frac{1}{2}} \, \delta^{1-h} \; \delta^{\min(2-2t-h;1)} = \mathcal{O}\left( \delta^{1 - h +\min(2-2t-h;1)} \right).
\end{equation*} 
Then, 
\begin{eqnarray}\label{SF}
\nonumber
u^{s}(x,\theta) &=&  \langle \Phi_{k}(x,z) \; C ; \tilde{Q} \rangle  + \mathcal{O}\left( \delta^{1 - h +\min(2-2t-h;1)} \right) + \mathcal{O}\left( \delta^{2-h} \right) \\
& = & \langle \Phi_{k}(x,z) \; C ; \tilde{Q} \rangle  + \mathcal{O}\left( \delta^{\min(3-2t-2h;2-h)} \right).
\end{eqnarray}
This proves $(\ref{intro-usca})$.
We recall from $(\ref{sysper})$, that we have: 
\begin{equation*}
\left( I - B_{k} \right) \; \overline{Q} \; = \; \overline{U} \; + \; \overline{Err}.
\end{equation*} 
Then: 
\begin{equation*}
\overline{Q}=\left( I - B_{k} \right)^{-1}  \overline{U}  + \left( I - B_{k} \right)^{-1}  \overline{Err} =  \sum_{n \geq 0} B_{k}^{n} \; \overline{U}  +  \overline{Err}. 
\end{equation*}
We have:
\begin{equation*}
\Vert B_{k}^{n} \; \overline{U} \Vert \leq \Vert B_{k}\Vert^{n} \; \Vert \overline{U} \Vert \simeq \Vert B_{k}\Vert^{n} \overset{(\ref{normBk})}{=} \mathcal{O}\left( \delta^{n(1-h-t)} \right).
\end{equation*} 
Now, we define the truncated solution of the unperturbed problem $\tilde{Q}^{N}$, $N \in \mathbb{N}$, by:
\begin{equation*}
\tilde{Q}^{N} := \sum_{n=0}^{N} \, B_{k}^{n} \cdot U
\end{equation*}
and its associated scattered field
\begin{equation}\label{TSF}
u^{s,N}(x,\theta)  :=  \langle \Phi_{k}(x,z) \; C ; \tilde{Q}^{N} \rangle. 
\end{equation}
Subtracting $(\ref{TSF})$ from $(\ref{SF})$, we obtain: 
\begin{eqnarray}\label{219}
\nonumber
u^{s}(x,\theta) - u^{s,N}(x,\theta) & = &  \langle \Phi_{k}(x,z) \; C ; \tilde{Q} - \tilde{Q}^{N} \rangle  + \mathcal{O}\left( \delta^{\min(3-2t-2h;2-h)} \right) \\ \nonumber
\left\vert u^{s}(x,\theta) - u^{s,N}(x,\theta) \right\vert & = & \left\vert  \langle \Phi_{k}(x,z) \; C ; \sum_{n \geq N+1} \, B_{k}^{n} \cdot U  \rangle  \right\vert + \mathcal{O}\left( \delta^{\min(3-2t-2h;2-h)} \right) \\ \nonumber
& \lesssim & \frac{\vert C \vert}{\underset{j=1,\cdots , M}{\min} \vert x-z_j \vert} \ \;  \sum_{n \geq N+1} \, \delta^{n (1-t-h)}  + \mathcal{O}\left( \delta^{\min(3-2t-2h;2-h)} \right) \\
& = & \delta^{1-h} \; \delta^{(N+1)(1-t-h)}  + \mathcal{O}\left( \delta^{\min(3-2t-2h;2-h)} \right).
\end{eqnarray}
We choose $h$ such that
\begin{equation*}
(1-h)+(N+1)(1-h-t) \leq  \min\{2-h,\; 3-2h-2t\},
\end{equation*} 
which can be rewritten as
\begin{equation}\label{condition-Foldy-Lax-proof}
0\leq 1-h-t \leq \min\left\{ \frac{1}{N+1}; \frac{1-t}{N}\right\},
\end{equation}
hence, $(\ref{219})$ becomes, 
\begin{equation}\label{N-interactions-proof}
u^{s}(x,\theta) - u^{s,N}(x,\theta) =  \mathcal{O}\left( \delta^{(1-h) + (N+1)(1-t-h)} \right).
\end{equation}
This justifies $(\ref{intro-N-interactions-proof})$. In addition, 
\begin{eqnarray}\label{N-interactions-scattered-field}
\nonumber
u^{s,N}(x,\theta)-u^{s,N-1}(x,\theta) & \overset{(\ref{TSF})}{=} & \langle \Phi_{k}(x,z) \, C; B_{k}^{N} \cdot U \rangle  \\ 
& \sim & \delta^{(1-h) + N(1-t-h)} >>  \delta^{(1-h) + (N+1)(1-t-h)} 
\end{eqnarray}
for in any bounded domain away from the collection of centers $z_j, j=1, \cdots, M$. Here $u^{s,N}(x,\theta)$ is the scattered field 
after $N$-interactions between the particles.  
\bigskip

Under the limit condition $1-t-h=0$, the expansion in $(\ref{SF})$ is reduced to: 
\begin{equation*}
u^{s}(x,\theta)  =  \langle \Phi_{k}(x,z) \; C ; \tilde{Q} \rangle  + \mathcal{O}\left( \delta \right).
\end{equation*}
If we set $u^{s,\infty}(x,\theta) :=  \langle \Phi_{k}(x,z) \; C ; \tilde{Q} \rangle$, then we obtain:
\begin{equation}\label{SFCC}
u^{s}(x,\theta)  - u^{s,\infty}(x,\theta) = \mathcal{O}\left( \delta \right)
\end{equation}
which proves $(\ref{intro-infty-interactions-proof})$. Actually, from the definition of $u^{s}(x,\theta)$, we have  
\begin{equation*}
u^{s,\infty}(x,\theta):=  \langle \Phi_{k}(x,z) \; C ; \sum_{n=0}^{\infty} \, B_{k}^{n} \cdot U \rangle = \sum_{n=0}^{\infty} \, \langle \Phi_{k}(x,z) \; C ; B_{k}^{n} \cdot U \rangle \sim \delta^{1-h} \sim \delta^{t}.
\end{equation*}
 is the field generated after all the interaction between the particles. We call it the Foldy-Lax field. In addition, it is clear that for each term in the previous series, i.e. any order $n \in \mathbb{N}$, we have $\langle \Phi_{k}(x,z) \; C ; B_{k}^{n} \cdot U \rangle \sim \delta^{1-h} >> \delta $, and this allows us to justify that, at any order of interactions $n \in \mathbb{N}$, at least one interaction between the $M$ particles can be seen from the scattered field measured away from the cluster of particles. This ends the proof of Corollary \ref{coro12}.
\bigskip
\newline
The justification of Corollary \ref{coro13}, can be handled in similar manner by taking into account the fact that $t=0$. 


\section{Proof of Theorem \ref{2D-case}}
We recall that the fundamental solution, $\Phi_{k}\left( \cdot,\cdot \right)$, of the  Helmholtz equation in dimension two  satisfies 
\begin{center}
$\begin{cases}
\underset{y}{\Delta} \Phi_{k}\left( x , y \right) + k^{2} \, \frac{b_{0}}{a_{0}} \, 
\Phi_{k}\left( x , y \right) = - \underset{x}{\delta}(y), \quad x,y \in \mathbb{R}^{2}, & \\
\Phi_{k}\left( \cdot,\cdot \right) \;\; S.R.C \;\; \text{at} \;\; infinity,   &
\end{cases}$
\end{center}
where $k$ is such that $k^{2} = k^{2} \, \frac{b_{0}}{a_{0}}$ and it is given via the Hankel function of first kind $H_{0}^{(1)}$. The following lemma, on the decomposition of the Green's kernel, is of importance to reduce the complexities of the Lippmann-Schwinger equation, and then to invert it.
\begin{lemma}\label{lemmadecomposition}
We have the following asymptotic expansion 
\begin{equation}\label{2Dlogexpansion}
\Phi_{k}(x , y) =   \Phi_{0}(x, y)  + \bm{E} + \mathcal{O}(\vert x - y \vert^{2} \; \log(\vert x-y \vert)), \,\, x \,\, near \,\, y,
\end{equation}
where $\Phi_{0}(\cdot,\cdot)$ is the fundamental solution of the Laplace equation in dimension two given by:
\begin{equation*}
\Phi_{0}(x, y) := \frac{-1}{2 \, \pi} \, \log\left(\left\vert x - y \right\vert \right), \quad x \neq y,
\end{equation*}
and $\bm{E}$ is the constant given by: 
\begin{equation*}
\bm{E} := \frac{i}{4} - \frac{1}{2 \pi} \left[\log\left( \frac{k}{2} \right) + \lim_{p \rightarrow +\infty} \left( \sum_{m=1}^{p} \frac{1}{m} - \log(p) \right) \right].
\end{equation*}
\end{lemma}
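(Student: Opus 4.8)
The plan is to read the lemma off the explicit formula for the two‑dimensional Helmholtz Green's function together with the classical small‑argument expansions of the Bessel functions. Recall that, with $k$ here standing for the rescaled wavenumber (so that $\Phi_k$ solves $\Delta\Phi_k+k^2\Phi_k=-\delta$), one has $\Phi_k(x,y)=\tfrac{i}{4}H_0^{(1)}(k|x-y|)$ and $H_0^{(1)}=J_0+iY_0$, where
\[
J_0(\zeta)=\sum_{m=0}^{\infty}\frac{(-1)^m}{(m!)^2}\Bigl(\frac{\zeta}{2}\Bigr)^{2m}=1+\mathcal{O}(\zeta^2),\qquad \zeta\to 0,
\]
\[
Y_0(\zeta)=\frac{2}{\pi}\Bigl(\log\frac{\zeta}{2}+\gamma_E\Bigr)J_0(\zeta)+\frac{2}{\pi}\sum_{m=1}^{\infty}\frac{(-1)^{m+1}}{(m!)^2}\Bigl(\frac{\zeta}{2}\Bigr)^{2m}\Bigl(\sum_{\ell=1}^{m}\frac1\ell\Bigr),
\]
and $\gamma_E=\lim_{p\to\infty}\bigl(\sum_{m=1}^{p}m^{-1}-\log p\bigr)$ is the Euler--Mascheroni constant. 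Setting $\zeta=k|x-y|$ and using $\log(\zeta/2)=\log(k/2)+\log|x-y|$, the substitution into $\tfrac{i}{4}H_0^{(1)}=\tfrac{i}{4}J_0-\tfrac14 Y_0$ gives
\[
\Phi_k(x,y)=\Bigl(\frac{i}{4}-\frac{1}{2\pi}\log\frac{k}{2}-\frac{\gamma_E}{2\pi}\Bigr)J_0(k|x-y|)-\frac{1}{2\pi}\log|x-y|\,J_0(k|x-y|)-\frac{1}{2\pi}\sum_{m=1}^{\infty}\frac{(-1)^{m+1}}{(m!)^2}\Bigl(\frac{k|x-y|}{2}\Bigr)^{2m}\Bigl(\sum_{\ell=1}^{m}\frac1\ell\Bigr).
\]

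Next I would extract the leading contributions of each of the three pieces. In the first piece $J_0(k|x-y|)=1+\mathcal{O}(|x-y|^2)$, so it produces the constant $\bm{E}=\tfrac{i}{4}-\tfrac{1}{2\pi}\bigl[\log(k/2)+\gamma_E\bigr]$ up to an $\mathcal{O}(|x-y|^2)$ error. The second piece equals $-\tfrac{1}{2\pi}\log|x-y|+\bigl(-\tfrac{1}{2\pi}\log|x-y|\bigr)\bigl(J_0(k|x-y|)-1\bigr)=\Phi_0(x,y)+\mathcal{O}(|x-y|^2\log|x-y|)$, since $J_0(k|x-y|)-1=\mathcal{O}(|x-y|^2)$. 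The third piece is a convergent power series in $|x-y|^2$ whose lowest term is of order $|x-y|^2$, hence $\mathcal{O}(|x-y|^2)$. Adding the three contributions and noting that, as $|x-y|\to 0$, the term $|x-y|^2\log|x-y|$ dominates $|x-y|^2$, one obtains exactly \eqref{2Dlogexpansion}.

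The estimates above are elementary, so there is no genuine obstacle; the only points deserving a line of care are the uniformity of the remainder — the implied constants depend only on $k$, which is $O(1)$ here by the footnote to Theorem~\ref{2D-case}, and they stay bounded as $|x-y|\to 0$ because the Bessel series converge locally uniformly on $\mathbb{C}$ — and the correct bookkeeping of the Euler--Mascheroni constant, which enters $\bm{E}$ solely through the logarithmic term of $Y_0$. In short, the lemma is the standard small‑argument asymptotics of $H_0^{(1)}$ rewritten so as to isolate the Laplace fundamental solution $\Phi_0$ and the frequency‑dependent constant $\bm{E}$.
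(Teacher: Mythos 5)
Your proof is correct and is essentially the same as the paper's: the paper's entire proof is a citation of formula (3.84), page 74, of Colton--Kress for the small-argument expansion of $H_{0}^{(1)}$, and what you write out is exactly the standard derivation of that formula from the series for $J_{0}$ and $Y_{0}$, with the Euler--Mascheroni constant and the $\log(k/2)$ term correctly collected into $\bm{E}$ and the remainder correctly identified as $\mathcal{O}(\vert x-y\vert^{2}\log\vert x-y\vert)$.
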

\begin{proof} 
See formula $(3.84)$, page 74 in \cite{colton1998inverse}.
\end{proof}
We start our proof of Theorem \ref{2D-case}, by setting the following Lippmann-Schwinger equation: 
\begin{equation*}
v_{m}(x) - k^{2} \, \frac{1}{a_{0}} \, \tau \, \int_{D_{m}} \Phi_{k}(x , y) \, v_{m}(y) \, dy - k^{2} \, \frac{1}{a_{0}} \, \tau \, \sum_{j \neq m} \int_{D_{j}} \Phi_{k}(x , y) \, v_{j}(y) \, dy = u^{i}(x), \quad x \in D_{m},
\end{equation*}
where, we assume that\footnote{As in 3D case, to simplify the computations, we write the detailed proof with $\tau$, but of course we can do it with $\left\{ \tau_{j} \right\}_{j=1}^{M}$.}, $\tau_{j} = \tau, \;\; j=1,\cdots,M$ as well. Then, thanks to Lemma \ref{lemmadecomposition}, we rewrite the previous L.S.E as: 
\begin{eqnarray}\label{equa1}
\nonumber
v_{m}(x) &-& k^{2} \, \frac{1}{a_{0}} \, \tau \, \int_{D_{m}} \Phi_{0}(x, y) \, v_{m}(y) \, dy  - k^{2} \, \frac{1}{a_{0}} \, \tau \, \bm{E} \, \int_{D_{m}} v_{m}(y) \, dy \\ \nonumber &-& k^{2} \, \frac{1}{a_{0}} \, \tau \, \sum_{j \neq m} \int_{D_{j}} \Phi_{k}(x , y) \, v_{j}(y) \, dy \\ &=& u^{i}(x) + \tau \, \int_{D_{m}}  \mathcal{O}(\vert  x - y \vert^{2} \; \log(\vert x - y \vert)) \, v_{m}(y) \, dy.
\end{eqnarray}
We introduce the Newtonian Potential operator $A^{0}$, defined by: 
\begin{equation}\label{Newpot2D}
A^{0}\left(v_{m}\right)(x) = \int_{D_{m}} \frac{-1}{2\pi} \log(\vert x-y \vert) \; v_{m}(y) \, dy. 
\end{equation}
Then, going back to $(\ref{equa1})$, using the definition of the Newtonian Potential operator, see $(\ref{Newpot2D})$, and expanding the Green's kernel and the incident field near the points $\left( z_{i} \right)_{i=1}^{M}$, we get:
\begin{eqnarray}\label{equa1}
\nonumber
\left[I - k^{2} \, \frac{1}{a_{0}} \, b \, A^{0} \right]\left( v_{m} \right)(x) &-& k^{2} \, \frac{1}{a_{0}} \; b \; \bm{E}  \int_{D_{m}}  v_{m}(y) \, dy \\ &-& k^{2} \, \frac{1}{a_{0}} \, b \, \sum_{j \neq m} \Phi_{k}(z_{m} , z_{j}) \,  \int_{D_{j}} v_{j}(y) \, dy = u^{i}(z_{m}) + Err_{1}(x), 
\end{eqnarray}
where 
\begin{eqnarray}\label{Err1}
\nonumber
Err_{1}(x) &:=& \int_{0}^{1} \, \nabla u^{i}(z_{m} + t (x-z_{m})) \cdot (x-z_{m}) \, dt \\ \nonumber &+& \tau \, \int_{D_{m}}  \mathcal{O}(\vert x-y\vert^{2} \; \log(\vert x-y \vert)) \, v_{m}(y) \, dy  \\ \nonumber
&+& k^{2} \, \frac{1}{a_{0}} \, \tau \, \sum_{j \neq m} \int_{D_{j}} \int_{0}^{1} \underset{y}{\nabla} \Phi_{k}(x,z_{j}+t(y-z_{j})) \cdot (y-z_{j}) \, dt \, v_{j}(y) \, dy \\ \nonumber
&+& k^{2} \, \frac{1}{a_{0}} \, \tau \, \sum_{j \neq m} \int_{0}^{1} \underset{x}{\nabla}\Phi_{k}(z_{m} + t (x-z_{m}), z_{j}) \cdot (x-z_{m}) dt \, \int_{D_{j}} v_{j}(y) \, dy \\ \nonumber
&-& k^{2} \, \frac{1}{a_{0}} \, b_{0}(z_{m}) \, A^{0} \left( v_{m} \right)(x) - k^{2} \, \frac{1}{a_{0}} \, b_{0}(z_{m}) \; \bm{E} \;  \int_{D_{m}}  v_{m}(y) \, dy \\
&-& k^{2} \, \frac{1}{a_{0}} \, b_{0}(z_{m}) \, \sum_{j \neq m} \Phi_{k}(z_{m} , z_{j})  \int_{D_{j}} v_{j}(y) \, dy
\end{eqnarray}
Let $w$ to be:  
\begin{equation*}
w := k^{2} \, \frac{1}{a_{0}} \, \epsilon_{p} \, \left[ I - k^{2} \, \frac{1}{a_{0}} \, \epsilon_{p} \, A^{0} \right]^{-1}(1) = \left[ \frac{a_{0}}{k^{2}  \, b} I - A^{0} \right]^{-1}(1) 
\end{equation*}
and its corresponding scattering coefficient $C_{m}$ defined as: 
\begin{equation*}
C_{m} := \int_{D_{m}} w(x) \, dx. 
\end{equation*}
Then successively, for the equation $(\ref{equa1})$, dividing both sides by $k^{2} \,\frac{b}{a_{0}}$, taking  the inverse operator of $\left[ \displaystyle\frac{a_{0}}{k^{2} \, b} I - A^{0} \right]$ and integrating over $D_{m}$, we obtain
\begin{eqnarray*} 
\left[ 1 - C_{m} \; \bm{E} \right] \int_{D_{m}} v_{m}(y) \, dy &-&  C_{m} \; \sum_{j \neq m} \Phi_{k}(z_{m}, z_{j}) \,  \int_{D_{j}} v_{j}(y) \, dy  \\ & = & \frac{C_{m} \, a_{0}}{k^{2} \, b} u^{i}(z_{m}) + \frac{a_{0}}{k^{2} \, b}  \int_{D_{m}} w(x) Err_{1}(x) \, dx.
\end{eqnarray*}
By multiplying the last equation by $k^{2} \; \frac{b}{a_{0}} \; C_{m}^{-1}$, we end up with:
\begin{equation}\label{AS2D}
k^{2} \,  \frac{b}{a_{0}} \,  \left[ C^{-1}_{m} -  \; \bm{E}  \right] \int_{D_{m}} v_{m}(y) \, dy -  k^{2} \,  \frac{b}{a_{0}} \; \sum_{j \neq m} \Phi_{k}(z_{m}, z_{j}) \, \int_{D_{j}} v_{j}(y) \, dy  =  u^{i}(z_{m}) + Err_{2,m},
\end{equation}
where:
\begin{equation}\label{Err2}
Err_{2,m} :=   C_{m}^{-1} \,   \int_{D_{m}} w(x) Err_{1}(x) \, dx.
\end{equation}
To estimate the term $Err_{2,m}$, we use the coming Lemma.
\begin{lemma}\label{apestimate2D}
The total field $v(\cdot)$ admits the following a priori estimation:
\begin{equation}\label{prioriest}
\Vert v \Vert_{\mathbb{L}^{2}(D)} \leq \vert \log(\delta) \vert^{h} \, \Vert u \Vert_{\mathbb{L}^{2}(D)}.
\end{equation} 
For the scattering function and scattering coefficient, we have the following relations:
\begin{equation}\label{W2DC}
\Vert w \Vert_{\mathbb{L}^{2}(D)} \leq \delta^{-1} \, \vert \log(\delta) \vert^{h-1} \quad \text{and} \quad
C_{m} = \vert \log(\delta) \vert^{h-1}.
\end{equation} 
\end{lemma}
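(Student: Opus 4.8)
The plan is to reduce the statement to (i) a spectral analysis of the Logarithmic Newtonian operator $A^{0}$ on a single particle and (ii) a Neumann series inversion of the multi--particle system $(\ref{equa1})$. First I would fix the resonant index $n$ of Theorem \ref{2D-case} and diagonalise $A^{0}:\mathbb{L}^{2}(D_{m})\to\mathbb{L}^{2}(D_{m})$, which is compact and self--adjoint: let $\{e_{\ell}\}$ be an orthonormal eigenbasis with real eigenvalues $\lambda_{\ell}\to 0$. By \textbf{Hypotheses} \ref{hyp} the eigenpair $(\lambda_{n},e_{n})$ we work with satisfies $\lambda_{n}\sim\delta^{2}\vert\log(\delta)\vert$ and $\int_{D_{m}}e_{n}\neq 0$; after the change of variables $x=z_{m}+\delta\xi$ (so $e_{n}$ rescales to an $\mathbb{L}^{2}(B_{m})$--normalised eigenfunction converging to a nonzero multiple of the constant), one gets $\int_{D_{m}}e_{n}(x)\,dx\sim\delta$. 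I would also record the spectral separation produced by the $\log$--type kernel, namely that all the remaining eigenvalues are $\mathcal{O}(\delta^{2})$, so $\vert\lambda_{n}-\lambda_{\ell}\vert\gtrsim\delta^{2}\vert\log(\delta)\vert$ for every $\ell\neq n$; this is exactly the estimate for $A^{0}$ proven in the appendix of \cite{2DGHANDRICHE}, and it is what the contrast scaling $\delta^{-2}\vert\log(\delta)\vert^{-1}$ in $(\ref{Third-2D})$ is designed to match.

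Next comes the resolvent estimate, from which $(\ref{W2DC})$ follows directly. Put $\kappa:=a_{0}/(k^{2}b)$; since $b\sim\tau\sim\delta^{-2}\vert\log(\delta)\vert^{-1}$ and $k$ is chosen as in $(\ref{omega-2D})$, one has $\kappa\sim\delta^{2}\vert\log(\delta)\vert$ and, crucially, $\kappa-\lambda_{n}=\mp\,\lambda_{n}\,\vert\log(\delta)\vert^{-h}(1+o(1))\sim\delta^{2}\vert\log(\delta)\vert^{1-h}$, while $\vert\kappa-\lambda_{\ell}\vert\gtrsim\delta^{2}\vert\log(\delta)\vert$ for $\ell\neq n$ by the separation above. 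From the spectral sum $[\kappa I-A^{0}]^{-1}=\sum_{\ell}(\kappa-\lambda_{\ell})^{-1}\langle\cdot,e_{\ell}\rangle e_{\ell}$ one reads off $\Vert[\kappa I-A^{0}]^{-1}\Vert\sim\delta^{-2}\vert\log(\delta)\vert^{h-1}$, and applying it to the constant function $1$ the resonant mode dominates: $w=[\kappa I-A^{0}]^{-1}(1)=(\kappa-\lambda_{n})^{-1}\langle 1,e_{n}\rangle e_{n}+(\text{lower order})$, the remaining terms being bounded in $\mathbb{L}^{2}(D_{m})$ by $\delta^{-2}\vert\log(\delta)\vert^{-1}\Vert 1\Vert_{\mathbb{L}^{2}(D_{m})}=\delta^{-1}\vert\log(\delta)\vert^{-1}$. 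Hence $\Vert w\Vert_{\mathbb{L}^{2}(D_{m})}\sim\delta\,(\delta^{2}\vert\log(\delta)\vert^{1-h})^{-1}=\delta^{-1}\vert\log(\delta)\vert^{h-1}$, and $C_{m}=\int_{D_{m}}w=(\kappa-\lambda_{n})^{-1}\bigl(\int_{D_{m}}e_{n}\bigr)^{2}+(\text{lower order})\sim\vert\log(\delta)\vert^{h-1}$.

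For the a priori bound $(\ref{prioriest})$ I would go back to $(\ref{equa1})$ and take as dominant diagonal operator $\mathcal{L}=\mathrm{diag}\{\,I-k^{2}\tfrac{b}{a_{0}}A^{0}-k^{2}\tfrac{b}{a_{0}}\bm{E}\,\langle 1,\cdot\rangle_{D_{m}}\,\}_{m=1}^{M}$, i.e.\ absorb the rank--one $\bm{E}$--term. Its inverse is $\mathcal{O}(\vert\log(\delta)\vert^{h})$: combine $\Vert[I-k^{2}\tfrac{b}{a_{0}}A^{0}]^{-1}\Vert=\kappa\,\Vert[\kappa I-A^{0}]^{-1}\Vert\sim\vert\log(\delta)\vert^{h}$ with the Sherman--Morrison formula for the rank--one perturbation, whose relevant denominator is $1-\bm{E}C_{m}$, which stays away from $0$ (automatic for $h<1$ since $C_{m}\to 0$; at the borderline $h=1$ one assumes $\bm{E}C_{m}\neq 1$, which is precisely the invertibility of $[C_{m}^{-1}-\bm{E}]$ used in Theorem \ref{2D-case}). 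Then I would move the cross terms $-k^{2}\tfrac{b}{a_{0}}\sum_{j\neq m}\Phi_{k}(z_{m},z_{j})\int_{D_{j}}v_{j}$ together with the Taylor/log remainders collected in $Err_{1}$ to the right, obtaining a fixed--point identity $v=\mathcal{L}^{-1}F(u)+\mathcal{L}^{-1}\mathcal{T}(v)$, and bound $\mathcal{L}^{-1}\mathcal{T}$ in operator norm: using $\vert\Phi_{k}(z_{m},z_{j})\vert\lesssim\vert\log d\vert\sim\vert\log(\delta)\vert^{t}$ and $\vert\nabla\Phi_{k}(z_{m},z_{j})\vert\lesssim d^{-1}$ (from $(\ref{d02D})$), $\vert D_{m}\vert\sim\delta^{2}$, $\tau\sim\delta^{-2}\vert\log(\delta)\vert^{-1}$, the remainder kernel $\mathcal{O}(\vert x-y\vert^{2}\log\vert x-y\vert)=\mathcal{O}(\delta^{2}\vert\log(\delta)\vert)$ on $D_{m}$, and $\Vert w\Vert$ from the previous step, every piece is $o(1)$ except the principal cross--coupling, which is $\lesssim\,$(geometric constant)$\cdot\vert\log(\delta)\vert^{t+h-1}$. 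This is $<1$ for $\delta$ small as soon as $t+h\le 1$ (at the borderline $t+h=1$ one needs, as in the $3$D argument with $C_{0}d_{0}<1$, a smallness condition on the geometric constant, with $M$ bounded). A Neumann series then gives $\Vert v\Vert_{\mathbb{L}^{2}(D)}\le\Vert(I-\mathcal{L}^{-1}\mathcal{T})^{-1}\Vert\,\Vert\mathcal{L}^{-1}\Vert\,\Vert u\Vert_{\mathbb{L}^{2}(D)}\lesssim\vert\log(\delta)\vert^{h}\,\Vert u\Vert_{\mathbb{L}^{2}(D)}$, which is $(\ref{prioriest})$.

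The step I expect to be the main obstacle is the control of the \emph{non--resonant} part of the spectrum: the whole argument rests on the resolvent of $\kappa I-A^{0}$ being governed by the single mode $e_{n}$, i.e.\ on the $\log$--type separation $\lambda_{n}\sim\delta^{2}\vert\log(\delta)\vert\gg\delta^{2}\gtrsim(\text{all other eigenvalues})$, which for general particle shapes is the content of \cite{2DGHANDRICHE} and for which \textbf{Hypotheses} \ref{hyp}(2) is tailored; a secondary difficulty is keeping the absorption in the last step uniform in the number $M$ of particles and handling the borderline regime $t+h=1$ (equivalently the mesoscale $d\sim\delta$ of $(\ref{d02D})$).
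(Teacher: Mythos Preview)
Your proposal is correct but follows a different logical order than the paper's own proof of this lemma. The paper's argument is extremely short: it cites \cite{2DGHANDRICHE} for the a~priori bound $(\ref{prioriest})$ and then \emph{derives} $(\ref{W2DC})$ from it as a corollary --- one observes that $w$ solves the single--particle Lippmann--Schwinger equation with constant source $k^{2}\tfrac{b}{a_{0}}\sim\tau\sim\delta^{-2}\vert\log(\delta)\vert^{-1}$, applies $(\ref{prioriest})$ with $u=\tau$ to get $\Vert w\Vert_{\mathbb{L}^{2}(D_{m})}\lesssim\vert\log(\delta)\vert^{h}\cdot\tau\cdot\delta=\delta^{-1}\vert\log(\delta)\vert^{h-1}$, and then bounds $C_{m}$ by Cauchy--Schwarz. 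You instead reverse the dependence: you obtain $(\ref{W2DC})$ first, by a direct resolvent computation $w=\sum_{\ell}(\kappa-\lambda_{\ell})^{-1}\langle 1,e_{\ell}\rangle e_{\ell}$ with the resonant mode dominating, and only afterwards establish $(\ref{prioriest})$ via a Neumann--series/fixed--point argument on the full multi--particle system. Your route is essentially the $2$D analogue of what the paper does in the $3$D appendix (Section~\ref{appendixproposition}) and is more self--contained; it also yields the genuine asymptotic $C_{m}\sim\vert\log(\delta)\vert^{h-1}$ rather than the mere upper bound that Cauchy--Schwarz gives. The price is that you must invoke the spectral separation $\vert\lambda_{n}-\lambda_{\ell}\vert\gtrsim\delta^{2}\vert\log(\delta)\vert$ for $\ell\neq n$, which --- as you correctly flag --- is the real input from \cite{2DGHANDRICHE} and the content of \textbf{Hypotheses}~\ref{hyp}.
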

\begin{proof}
The a priori estimate $(\ref{prioriest})$ is already proved in Section 4 of \cite{2DGHANDRICHE}. If in $(\ref{prioriest})$, we take $u(\cdot) = \tau$, recalling that $\tau \sim \delta^{-2} \, \left\vert \log(\delta) \right\vert^{-1}$, we deduce the first part of $(\ref{W2DC})$. The second part can be proved using Cauchy-Schwartz inequality and the obtained estimation for $\Vert w \Vert_{\mathbb{L}^{2}(D)}$.  
\end{proof}
First, by combining $(\ref{Err1})$ and $(\ref{Err2})$, we derive the following expression for $Err_{2,m}$, 
\begin{eqnarray*}
Err_{2,m} &=& C_{m}^{-1} \, \int_{D_{m}} w(x) \, \int_{0}^{1} \, \nabla u^{i}(z_{m} + t (x-z_{m})) \cdot (x-z_{m}) \, dt \, dx \\ \nonumber &+& C_{m}^{-1} \, \tau \, \int_{D_{m}} w(x)  \int_{D_{m}} \vert x-y\vert^{2} \; \log(\vert x-y \vert) \, v_{m}(y) \, dy \, dx \\ \nonumber
&+& \frac{k^{2}  \, \tau}{C_{m} \, a_{0}} \, \int_{D_{m}} w(x)  \sum_{j \neq m} \int_{D_{j}} \int_{0}^{1} \underset{y}{\nabla} \Phi_{k}(x , z_{j}+t(y-z_{j}) ) \cdot (y-z_{j}) \, dt \, v_{j}(y) \, dy \, dx \\ 
&+& \frac{k^{2} \, \tau}{C_{m} \, a_{0}} \, \int_{D_{m}} w(x)  \sum_{j \neq m} \int_{0}^{1} \underset{x}{\nabla}\Phi_{k}( z_{m} + t (x-z_{m}) , z_{j}) \cdot (x-z_{m}) dt \, \int_{D_{j}} v_{j}(y) \, dy \, dx \\
&-& k^{2} \, \frac{1}{a_{0}} \, b_{0}(z_{m}) \, C^{-1}_{m} \, \int_{D_{m}} w(x) \cdot A^{0}(v_{m})(x) \, dx - k^{2} \, \frac{1}{a_{0}} \, b_{0}(z_{m}) \;\bm{E}  \; \int_{D_{m}} v_{m}(y) \, dy \\
&-& k^{2} \, \frac{1}{a_{0}} \, b_{0}(z_{m}) \, \sum_{j \neq m} \Phi_{k}\left(z_{m} , z_{j}\right) \, \int_{D_{j}} v_{j}(y) \, dy.
\end{eqnarray*}
Next, we split the expression of $Err_{2,m}$ into seven terms, and then we estimate them separately to get an estimation of $Err_{2,m}$.  
\begin{enumerate}
\item[]
\item[$\ast)$]Estimation of $T_{1}$:
\begin{eqnarray}\label{T12D}
\nonumber
T_{1} &:=& C_{m}^{-1} \, \int_{D_{m}} w(x) \, \int_{0}^{1} \, \nabla u^{i}(z_{m} + t (x-z_{m})) \cdot (x-z_{m}) \, dt \, dx \\ \nonumber
\left\vert T_{1} \right\vert & \overset{(\ref{W2DC})}{\lesssim} & \left\vert \log(\delta) \right\vert^{1-h} \, \left\Vert w \right\Vert_{\mathbb{L}^{2}(D_{m})} \, \left\Vert \int_{0}^{1} \, \nabla u^{i}(z_{m} + t (\cdot - z_{m})) \cdot (\cdot - z_{m}) \, dt \right\Vert_{\mathbb{L}^{2}(D_{m})} \\
& \lesssim & \left\vert \log(\delta) \right\vert^{1-h}  \, \left\Vert w \right\Vert_{\mathbb{L}^{2}(D_{m})} \, \delta^{2} \overset{(\ref{W2DC})}{=} \mathcal{O}\left( \delta \right).
\end{eqnarray} 
\item[]
\item[$\ast)$]Estimation of $T_{2}$:
\begin{eqnarray}\label{T22D}
\nonumber
T_{2} &:=& C_{m}^{-1} \, \tau \, \int_{D_{m}} w(x)  \int_{D_{m}} \vert x-y\vert^{2} \; \log(\vert x-y \vert) \, v_{m}(y) \, dy \, dx \\ \nonumber
\left\vert T_{2} \right\vert & \overset{(\ref{W2DC})}{\lesssim} & \delta^{-3} \, \left\vert \log(\delta) \right\vert^{-1} \, \left\Vert  \int_{D_{m}} \vert \cdot - y \vert^{2} \; \log(\vert \cdot - y \vert) \, v_{m}(y) \, dy \right\Vert_{\mathbb{L}^{2}(D_{m})} \\
& \lesssim & \delta \, \left\Vert v_{m} \right\Vert_{\mathbb{L}^{2}(D_{m})} \overset{(\ref{prioriest})}{=} \mathcal{O}\left( \delta^{2} \, \left\vert \log(\delta) \right\vert^{h} \right). 
\end{eqnarray}
\item[]
\item[$\ast)$] Estimation of $T_{3}$:
\begin{eqnarray*}
T_{3} &:=& \frac{k^{2} \, \tau}{C_{m} \, a_{0}} \, \int_{D_{m}} w(x)  \sum_{j \neq m} \int_{D_{j}} \int_{0}^{1} \underset{y}{\nabla} \Phi_{k}(x, z_{j}+t(y-z_{j}) ) \cdot (y-z_{j}) \, dt \, v_{j}(y) \, dy \, dx \\
\left\vert T_{3} \right\vert & \lesssim &  \frac{\left\Vert w \right\Vert_{\mathbb{L}^{2}(D_{m})}}{\delta \, \left\vert \log(\delta) \right\vert^{h}} \,  \sum_{j \neq m} \left[ \int_{D_{m}} \int_{D_{j}}  \int_{0}^{1} \left\vert \underset{y}{\nabla} \Phi_{k}(x, z_{j}+t(y-z_{j}))  \right\vert^{2} dt  dy \, dx \right]^{\frac{1}{2}} \, \left\Vert v_{j} \right\Vert_{\mathbb{L}^{2}(D_{j})}. 
\end{eqnarray*}
Thanks to Lemma \ref{lemmadecomposition}, we can approximate the previous estimation by:  
\begin{eqnarray*}
\left\vert T_{3} \right\vert & \lesssim &  \frac{\left\Vert w \right\Vert_{\mathbb{L}^{2}(D_{m})}}{\delta \, \left\vert \log(\delta) \right\vert^{h}} \,  \sum_{j \neq m} \left[ \int_{D_{m}} \int_{D_{j}}  \int_{0}^{1} \frac{1}{\left\vert  x - (z_{j}+t(y-z_{j}))\right\vert^{2}} dt  dy \, dx \right]^{\frac{1}{2}} \, \left\Vert v_{j} \right\Vert_{\mathbb{L}^{2}(D_{j})} \\
& \lesssim &  \frac{\delta \, \left\Vert w \right\Vert_{\mathbb{L}^{2}(D_{m})}}{\left\vert \log(\delta) \right\vert^{h}} \,  \sum_{j \neq m}  \frac{1}{\left\vert  z_{m} - z_{j}\right\vert}  \, \left\Vert v_{j} \right\Vert_{\mathbb{L}^{2}(D_{j})} \\ 
& \lesssim &  \frac{\delta \, \left\Vert w \right\Vert_{\mathbb{L}^{2}(D_{m})}}{\left\vert \log(\delta) \right\vert^{h}} \, \left( \sum_{j \neq m}  \frac{1}{\left\vert  z_{m} - z_{j}\right\vert^{2}} \right)^{\frac{1}{2}} \,  \left\Vert v \right\Vert_{\mathbb{L}^{2}(D)} \\ 
& \lesssim &  \frac{\delta \, \left\Vert w \right\Vert_{\mathbb{L}^{2}(D_{m})}}{\left\vert \log(\delta) \right\vert^{h}} \, d^{-1} \,  \left\Vert v \right\Vert_{\mathbb{L}^{2}(D)}.
\end{eqnarray*}
Using the a priori estimate $(\ref{prioriest})$ and the estimation of $\left\Vert w \right\Vert_{\mathbb{L}^{2}(D_{m})}$, see $(\ref{W2DC})$, we get: 
\begin{equation}\label{T42D}
\left\vert T_{3} \right\vert = \mathcal{O}\left(\left\vert \log(\delta) \right\vert^{h-1} \; \delta \; d^{-1}  \right).
\end{equation}
\item[]
\item[$\ast)$] Estimation of $T_{4}$: 
\begin{equation*}
T_{4} := \frac{k^{2} \, \tau}{C_{m} \, a_{0}} \, \int_{D_{m}} w(x)  \sum_{j \neq m} \int_{0}^{1} \underset{x}{\nabla}\Phi_{k}(z_{m} + t (x-z_{m}), z_{j} ) \cdot (x-z_{m}) dt \, \int_{D_{j}} v_{j}(y) \, dy \, dx.
\end{equation*}
The above expression for $T_{4}$ is almost the same as that of $T_{3}$, consequently its estimation can be handled in similar manner and we deduce, from $(\ref{T42D})$, that: 
\begin{equation*}
\left\vert T_{4} \right\vert = \mathcal{O}\left(\left\vert \log(\delta) \right\vert^{h-1} \; \delta \; d^{-1}  \right).
\end{equation*}
\item[$\ast)$] Estimation of $T_{5}$:
\begin{eqnarray*}
T_{5} &:=& - k^{2} \, \frac{1}{a_{0}} \, b_{0}(z_{m}) \, C^{-1}_{m} \, \int_{D_{m}} w(x) \cdot A^{0}(v_{m})(x) \, dx \\
\left\vert T_{5} \right\vert & \lesssim & \left\vert \log(\delta) \right\vert^{1-h} \, \left\Vert w \right\Vert_{\mathbb{L}^{2}(D_{m})} \,  \left\Vert A^{0}(v_{m}) \right\Vert_{\mathbb{L}^{2}(D_{m})} \\
& \lesssim & \left\vert \log(\delta) \right\vert^{2-h} \, \delta^{2} \, \left\Vert w \right\Vert_{\mathbb{L}^{2}(D_{m})} \,  \left\Vert v_{m} \right\Vert_{\mathbb{L}^{2}(D_{m})}.
\end{eqnarray*}
Then, thanks to Lemma \ref{apestimate2D}, we deduce that: 
\begin{equation*}
T_{5} = \mathcal{O}\left( \left\vert \log(\delta) \right\vert^{h+1} \, \delta^{2} \right).
\end{equation*}
\item[$\ast)$] Estimation of $T_{6}$:
\begin{eqnarray*}
T_{6} &:=& - k^{2} \, \frac{1}{a_{0}} \, b_{0}(z_{m}) \; \bm{E} \; \int_{D_{m}} v_{m}(y) \, dy \\
\left\vert T_{6} \right\vert & \lesssim & \left\Vert 1 \right\Vert_{\mathbb{L}^{2}(D_{m})} \, \left\Vert v_{m} \right\Vert_{\mathbb{L}^{2}(D_{m})} \overset{(\ref{prioriest})}{=} \mathcal{O}\left(\delta^{2} \, \left\vert \log(\delta) \right\vert^{h} \right).
\end{eqnarray*}
\item[$\ast)$] Estimation of $T_{7}$:
\begin{eqnarray*}
T_{7} &:=& - k^{2} \, \frac{1}{a_{0}} \, b_{0}(z_{m}) \, \sum_{j \neq m} \Phi_{k}\left(z_{m},  z_{j} \right) \, \int_{D_{j}} v_{j}(y) \, dy \\
\left\vert T_{7} \right\vert & \lesssim & \sum_{j \neq m} \log\left(\frac{1}{\left\vert z_{m} - z_{j} \right\vert} \right) \, \left\Vert 1 \right\Vert_{\mathbb{L}^{2}(D_{j})} \, \left\Vert v_{j} \right\Vert_{\mathbb{L}^{2}(D_{j})} \\
& \overset{(\ref{prioriest})}{\lesssim} & \delta^{2} \, \left\vert \log(\delta) \right\vert^{h} \, \sum_{j \neq m} \log\left(\frac{1}{\left\vert z_{m} - z_{j} \right\vert} \right).
\end{eqnarray*}
It is proven, in Lemma 2.6 of \cite{2DGHANDRICHE}, that: 
\begin{equation}\label{sumlog}
\sum_{j \neq m} \log\left( \frac{1}{d_{mj}} \right) = \log(1/d).  
\end{equation}
Then, 
\begin{equation}\label{T52D}
T_{7} = \mathcal{O}\left(\delta^{2} \, \left\vert \log(\delta) \right\vert^{h} \, \log(1/d) \right).
\end{equation} 
\end{enumerate}
Finally, by gathering $(\ref{T12D})-(\ref{T52D})$, we end up with the following estimation for $Err_{2,m}$. 
\begin{equation*}
Err_{2,m} = \sum_{i=1}^{7} T_{i} = \mathcal{O}\left( \delta \, \left\vert \log(\delta)\right\vert^{h-1} \, d^{-1} \right),
\end{equation*} 
and, knowing that $d \sim e^{- \, \left\vert \log(\delta) \right\vert^{t}}$, we obtain: 
\begin{equation}
Err_{2,m} =  \mathcal{O}\left( \delta \, \left\vert \log(\delta)\right\vert^{h-1} \, e^{\left\vert \log(\delta) \right\vert^{t}} \right).
\end{equation} 
At this stage, the algebraic system given by $(\ref{AS2D})$ can be written as: 
\begin{equation*}
k^{2} \, \frac{b}{a_{0}} \,  \left[ C^{-1}_{m} - \bm{E}  \right] \int_{D_{m}} v_{m}(y) \, dy -  k^{2} \, \frac{b}{a_{0}} \; \sum_{j \neq m} \Phi_{k}(z_{m}, z_{j}) \int_{D_{j}} v_{j}(y) \, dy  =  u^{i}(z_{m}) + Err_{2,m}.
\end{equation*}
We set:
\begin{equation*}
Q_{m} := k^{2} \, \frac{b}{a_{0}} \,  \left[ C^{-1}_{m} - \bm{E}  \right] \int_{D_{m}} v_{m}(y) \, dy. 
\end{equation*}
Then
\begin{equation}\label{slcc}
Q_{m} - \sum_{j \neq m} \Phi_{k}(z_{m}, z_{j}) \,\left[ C^{-1}_{j} - \bm{E} \right]^{-1}  Q_{j} = u^{i}(z_{m}) + Err_{2,m}. 
\end{equation}
To write short, we set
\begin{equation}\label{defCstar}
C_{m}^{\star} := \left[ C^{-1}_{m} - 
\bm{E}  \right]^{-1}, \;\; m=1,\cdots,M, 
\end{equation}
hence, the equation $(\ref{slcc})$ will takes the following form, 
\begin{equation}\label{slcstar}
Q_{m} - \sum_{j \neq m} \Phi_{k}(z_{m}, z_{j}) \; C_{j}^{\star} \; Q_{j} = u^{i}(z_{m}) + Err_{2,m},   
\end{equation}
which can be rewritten in a in matrix form as 
\begin{equation}\label{per}
\left( I - B_{k} \right) \cdot Q = U + Err,
\end{equation}
where $B_{k} = \left( B_{k,mj} \right)_{m,j=1}^{M}$ such that
\begin{equation}\label{defB}
B_{k,mj} := \Phi_{k}(z_{m};z_{j}) \; C_{j}^{\star} \; \left(1 - \underset{j,m}{\bm{\delta}} \right),
\end{equation}
$U = \left(u^{i}(z_{j}) \right)_{j=1,\cdots,M}$ and $Err = \left(Err_{2,m}, \cdots, Err_{2,m} \right)$. 
\bigskip
\newline
As it was done in the proof of Theorem \ref{3D-case}, we start by looking under what conditions the following non perturbed linear system
\begin{equation}\label{nonper}
\left( I - B_{k} \right) \cdot \tilde{Q} = U
\end{equation}
is invertible. To accomplish this, let us evaluate the norm of $B_{k}$. For this we have: 
\begin{equation*}
\left\Vert B_{k} \right\Vert =  \max_{m} \, \sum_{j \neq m} \left\vert B_{k,mj} \right\vert  \overset{(\ref{defB})}{=}  \max_{m} \, \sum_{j \neq m} \left\vert \Phi_{k}(z_{m}, z_{j} ) \; C_{j}^{\star} \right\vert \overset{(\ref{defCstar})}{=}  \max_{m} \, \sum_{j \neq m}  \frac{\left\vert \Phi_{k}(z_{m}, z_{j}) \right\vert}{\left\vert C^{-1}_{j} -   \bm{E}  \right\vert}. 
\end{equation*}
Thanks to $(\ref{W2DC})$, we know that $C_{j} \sim \left\vert \log(\delta) \right\vert^{h-1}$ and from  Lemma \ref{lemmadecomposition} we can approximate $\Phi_{k}(\cdot,\cdot)$ by the logarithmic kernel. We manage all this to deduce: 
\begin{equation}\label{normBlog} 
\left\Vert B_{k} \right\Vert  \lesssim   \vert \log(\delta) \vert^{h-1} \, \sum_{j \neq m} \log\left(\frac{1}{d_{mj}}\right) \overset{(\ref{sumlog})}{=} \mathcal{O}\left( \vert \log(\delta) \vert^{h-1} \,  \log\left(\frac{1}{d}\right) \right),   
\end{equation}
and, the condition $\Vert B_{k} \Vert < 1$ will be fulfilled if: 
\begin{equation*}
\log(1/d) < \vert \log(\delta) \vert^{1-h}.
\end{equation*} 
This implies that: 
\begin{equation}\label{Condinv}
d > \exp\left( -\vert \log(\delta) \vert^{1-h} \right).
\end{equation}
Now, we assume that the invertibility condition $(\ref{Condinv})$ is fulfilled, by subtracting $(\ref{nonper})$ from  $(\ref{per})$, we get
\begin{equation*}
Q - \tilde{Q} = \left( I - B_{k} \right)^{-1} \, Err \end{equation*} 
and, by taking the Euclidean norm in both sides, we obtain 
\begin{equation}\label{DiffQ-Q}
\Vert Q - \tilde{Q} \Vert\lesssim  \Vert Err \Vert = \mathcal{O}\left( Err_{2,m} \right). 
\end{equation}
\bigskip
\newline
To prove $(\ref{intro-usca-2D})$ we derive an approximation for the scattered field. To do this, we start by setting the coming L.S.E: 
\begin{equation*}
u^{s}(x,\theta)  =  k^{2} \, \frac{1}{a_{0}} \, \tau \, \sum_{m=1}^{M} \, \int_{D_{m}} \Phi_{k}(x,y) \; v_{m}(y) \; dy, 
\end{equation*}
which, after using Taylor expansion near the center $z_{m}$ for the kernel $\Phi_{k}(\cdot,\cdot)$, becomes: 
\begin{eqnarray*}
u^{s}(x,\theta) & = & k^{2} \, \frac{1}{a_{0}} \, \tau \, \sum_{m=1}^{M} \; \Phi_{k}(x,z_{m}) \; \int_{D_{m}} v_{m}(y) \, dy \\ &+& k^{2} \, \frac{1}{a_{0}} \, \tau \, \sum_{m=1}^{M} \, \int_{D_{m}} \int_{0}^{1}  \underset{y}{\nabla}\Phi_{k}(x, z_{m} + t (y - z_{m})) \cdot (y-z_{m}) \, dt \; v_{m}(y) \, dy.
\end{eqnarray*}
We estimate the second term on the right hand side as: 
\begin{eqnarray*}
J_{1} & := & k^{2} \, \frac{1}{a_{0}} \, \tau \, \sum_{m=1}^{M} \, \int_{D_{m}} \int_{0}^{1}  \underset{y}{\nabla}\Phi_{k}(x, z_{m} + t (y - z_{m})) \cdot (y-z_{m}) \, dt \; v_{m}(y) \, dy \\
\left\vert J_{1} \right\vert & \lesssim & \tau \sum_{m=1}^{M} \left\Vert \int_{0}^{1}  \underset{y}{\nabla}\Phi_{k}( x, z_{m} + t (\cdot - z_{m})) \cdot (\cdot - z_{m}) dt \right\Vert_{\mathbb{L}^{2}(D_{m})}  \left\Vert v_{m} \right\Vert_{\mathbb{L}^{2}(D_{m})}.
\end{eqnarray*}
We recall that $\tau \sim \delta^{-2} \left\vert \log(\delta) \right\vert^{-1}$ and we assume that the point $x$ is away from $D$, this implies the smoothness of the function under the integral sign. Then, 
\begin{equation*}
\left\vert J_{1} \right\vert  \lesssim   \left\vert \log(\delta) \right\vert^{-1} \, \sum_{m=1}^{M} \left\Vert v_{m} \right\Vert_{\mathbb{L}^{2}(D_{m})} \leq \left\vert \log(\delta) \right\vert^{-1}  \,  \left\Vert v \right\Vert_{\mathbb{L}^{2}(D)} \overset{(\ref{prioriest})}{=} \mathcal{O}\left( \delta \, \left\vert \log(\delta) \right\vert^{h-1}  \right).
\end{equation*}
Therefore, 
\begin{eqnarray}\label{SK} 
\nonumber
u^{s}(x,\theta) & = & k^{2} \, \frac{1}{a_{0}} \, \tau \, \sum_{m=1}^{M} \, \Phi_{k}(x,z_{m}) \, \int_{D_{m}} v_{m}(y) \, dy + \mathcal{O}\left( \delta \, \left\vert \log(\delta) \right\vert^{h-1} \right) \\ \nonumber 
& = & \sum_{m=1}^{M} \, \Phi_{k}(x,z_{m}) \, C_{m}^{\star} \, Q_{m} + \mathcal{O}\left( \delta \, \left\vert \log(\delta) \right\vert^{h-1}  \right) \\
& = & \langle \, \Phi_{k}(x,z) \, \, C^{\star}  ; \tilde{Q} \rangle + \langle \, \Phi_{k}(x,z) \, \, C^{\star} ; \, (Q - \tilde{Q}) \rangle + \mathcal{O}\left( \delta \, \left\vert \log(\delta) \right\vert^{h-1}  \right).
\end{eqnarray}
We need to estimate the second term on the right hand side of the previous equation. To do this, we set $J_{2}:= \langle \, \Phi_{k}(x,z) \, \, C^{\star} \, ; \, (Q - \tilde{Q}) \rangle$. Then,
\begin{equation*}
\vert J_{2} \vert  \leq  \Vert \, \Phi_{k}(x,z)  \, C^{\star} \, \Vert \, \Vert Q - \tilde{Q} \Vert  \overset{(\ref{DiffQ-Q})}{\leq}  \Vert \, \Phi_{k}(x,z) \, \, C^{\star} \, \Vert \; \delta^{1-t} \; \vert \log(\delta) \vert^{h-1}.
\end{equation*}
Again, we suppose that the point $x$ is away  from the domain $D$, which implies the smoothness of the functions $\, \Phi_{k}(x,z_{m})$,\,$ m=1;\cdots;M$, then 
\begin{equation*}
\left\vert J_{2} \right\vert  \lesssim  \left[ \sum_{m=1}^{M}  \left\vert C_{m}^{\star} \, \right\vert^{2} \right]^{\frac{1}{2}} \; \delta^{1-t} \; \vert \log(\delta) \vert^{h-1}  \lesssim  \, \vert \log(\delta) \vert^{h-1} \; \delta^{1-t} \; \vert \log(\delta) \vert^{h-1}.
\end{equation*}
Then, 
\begin{equation}
J_{2} = \mathcal{O}\left(\delta^{1-t} \; \vert \log(\delta) \vert^{2(h-1)} \right).
\end{equation}
Finally, $(\ref{SK})$ becomes, 
\begin{equation}\label{T+R1+R2}
u^{s}(x,\theta) =  \langle \, \Phi_{k}(x,z) \, \, C^{\star} \, ; \tilde{Q} \rangle + \mathcal{O}\left( \delta^{1-t} \, \vert \log(\delta) \vert^{2 \, (h-1)} \right)+ \mathcal{O}\left( \delta \, \vert \log(\delta) \vert^{(h-1)} \right).
\end{equation}
This ends the proof of Theorem \ref{2D-case}. 
\vspace{3mm}
\newline
Now, we define the truncated series that we denote by $\tilde{Q}^{N}, N \in \mathbb{N},$ by $\tilde{Q}^{N} := \underset{n=0}{\overset{N}{\sum}} B_{k}^{n} \cdot U$ and its associated scattered field 
\begin{equation}\label{31M}
u^{s,N}(x,\theta) :=  \langle \, \Phi_{k}(x,z)  \, C^{\star} \, ; \tilde{Q}^{N} \rangle.
\end{equation}
Then, by subtracting $(\ref{31M})$ from $(\ref{T+R1+R2})$, we obtain  
\begin{eqnarray*}
u^{s}(x,\theta) - u^{s,N}(x,\theta) &=&  \langle \, \Phi_{k}(x,z) \, \, C^{\star} \, ; \left( \tilde{Q}  - \tilde{Q}^{N} \right) \rangle \\ &+& \mathcal{O}\left( \delta^{1-t} \, \vert \log(\delta) \vert^{2 \, (h-1)} \right)+ \mathcal{O}\left( \delta \, \vert \log(\delta) \vert^{(h-1)} \right) \\
&=&  \langle \, \Phi_{k}(x,z) \, \, C^{\star} \, ; \left(\sum_{n \geq N+1} B_{k}^{n} \right) \cdot U \rangle \\ &+& \mathcal{O}\left( \delta^{1-t} \, \vert \log(\delta) \vert^{2 \, (h-1)} \right)+ \mathcal{O}\left( \delta \, \vert \log(\delta) \vert^{(h-1)} \right).
\end{eqnarray*}
After taking the modulus, in both sides of the previous equation, we obtain  
\begin{eqnarray*}
\left\vert u^{s}(x,\theta) - u^{s,N}(x,\theta) \right\vert & \leq & \left\vert  \, \Phi_{k}(x,z) \, \right\vert \, \left\vert C^{\star} \right\vert \;  \sum_{n \geq N+1} \left\vert B_{k} \right\vert^{n} \, \left\vert U \right\vert \\ &+& \mathcal{O}\left( \delta^{1-t} \, \vert \log(\delta) \vert^{2 \, (h-1)} \right)+ \mathcal{O}\left( \delta \, \vert \log(\delta) \vert^{(h-1)} \right).
\end{eqnarray*}
Because, $\left\vert U \right\vert \sim 1$ and $C^{\star} \sim C \sim \left\vert \log(\delta) \right\vert^{h-1}$, we reduce the previous inequality to 
\begin{eqnarray*}
\left\vert u^{s}(x,\theta) - u^{s,N}(x,\theta) \right\vert & \lesssim & \left\vert \log(\delta) \right\vert^{h-1} \, \underset{z_{j}}{\min} \left\vert  \Phi_{k} (x,z_{j})\right\vert  \;  \sum_{n \geq N+1} \left\vert B_{k} \right\vert^{n}  \\ &+& \mathcal{O}\left( \delta^{1-t} \, \vert \log(\delta) \vert^{2 \, (h-1)} \right)+ \mathcal{O}\left( \delta \, \vert \log(\delta) \vert^{(h-1)} \right).
\end{eqnarray*}
As $\underset{z_{j}}{\min} \left\vert  \Phi_{k} (x,z_{j})\right\vert \sim 1$, we obtain
\begin{eqnarray*}
\left\vert u^{s}(x,\theta) - u^{s,N}(x,\theta) \right\vert & \lesssim & \left\vert \log(\delta) \right\vert^{h-1}  \, \sum_{n \geq N+1} \left\vert B_{k} \right\vert^{n}  \\ &+& \mathcal{O}\left( \delta^{1-t} \, \vert \log(\delta) \vert^{2 \, (h-1)} \right)+ \mathcal{O}\left( \delta \, \vert \log(\delta) \vert^{(h-1)} \right).
\end{eqnarray*}
From the expression of the matrix $B_{k}$, see $(\ref{defB})$, we can prove that \\ $\left\vert B_{k} \right\vert = \mathcal{O}\left(\left\vert \log(\delta) \right\vert^{h-1} \, \left\vert \log\left( \frac{1}{d} \right) \right\vert \right)$ and using the fact that $d \sim e^{- \left\vert \log(\delta) \right\vert^{t}}$, we deduce that $\left\vert B_{k} \right\vert = \mathcal{O}\left(\left\vert \log(\delta) \right\vert^{t+h-1}  \right)$. Hence, 
\begin{eqnarray*}
\left\vert u^{s}(x,\theta) - u^{s,N}(x,\theta) \right\vert & \lesssim & \left\vert \log(\delta) \right\vert^{h-1}  \, \sum_{n \geq N+1} \left\vert \log(\delta) \right\vert^{n (t+h-1)}  \\ &+& \mathcal{O}\left( \delta^{1-t} \, \vert \log(\delta) \vert^{2 \, (h-1)} \right)+ \mathcal{O}\left( \delta \, \vert \log(\delta) \vert^{(h-1)} \right).
\end{eqnarray*}
Finally, 
\begin{enumerate}
\item[$\ast)$] under the condition $1-t-h > 0$,  
\begin{equation*}
\left\vert u^{s}(x,\theta) - u^{s,N}(x,\theta) \right\vert = \mathcal{O}\left(\left\vert \log(\delta) \right\vert^{(h-1) + (N+1) (t+h-1)} \right).
\end{equation*}
This proves $(\ref{intro-N-interactions-proof-2D})$.
\item[$\ast)$] under the condition $1-t-h = 0$, the equation $(\ref{T+R1+R2})$ becomes
\begin{equation*}
u^{s}(x,\theta) =  \langle \, \Phi_{k}(x,z) \, \, C^{\star} \, ; \tilde{Q} \rangle + \mathcal{O}\left( \delta^{1-t} \, \vert \log(\delta) \vert^{-2 \, t} \right),
\end{equation*}
and we set $u^{s, \infty}(x,\theta) :=  \langle \, \Phi_{k}(x,z) \, \, C^{\star} \, ; \tilde{Q} \rangle$, then:
\begin{equation*}
u^{s}(x,\theta) - u^{s,\infty}(x,\theta) =   \mathcal{O}\left( \delta^{1-t} \, \vert \log(\delta) \vert^{-2 \, t} \right),
\end{equation*}
this proves $(\ref{intro-infty-interactions-proof-2D})$. Form the definition of $u^{s, \infty}(\cdot, \cdot)$ we have 
\begin{eqnarray*}
u^{s, \infty}(x,\theta) &=&  \langle \, \Phi_{k}(x,z) \, \, C^{\star} \, ; \tilde{Q} \rangle \\ &=& \sum_{n \geq 0} \langle \, \Phi_{k}(x,z) \, \, C^{\star} \, ; B_{k}^{n} \cdot U \rangle \\ & \sim & \left\vert \log(\delta) \right\vert^{h-1} \sim \left\vert \log(\delta) \right\vert^{-t} >> \delta^{1-t} \, \vert \log(\delta) \vert^{-2 \, t}.
\end{eqnarray*}
\end{enumerate}
In addition, 
\begin{eqnarray*}
u^{s,N}(x,\theta) - u^{s,N-1}(x,\theta) &=& \langle \Phi_{k}(x,z) C^{\star} ; \left(\tilde{Q}^{N} - \tilde{Q}^{N-1}\right) \rangle \\
\left\vert u^{s,N}(x,\theta) - u^{s,N-1}(x,\theta) \right\vert & \lesssim & \left\vert C^{\star} \right\vert \; \left\vert \tilde{Q}^{N} - \tilde{Q}^{N-1}\right\vert  \lesssim  \left\vert C \right\vert \; \left\vert B_{k} \right\vert^{N}.
\end{eqnarray*}
Using the estimation of $C \sim \left\vert \log(\delta) \right\vert^{h-1}$ and $\left\vert B_{k} \right\vert  = \mathcal{O}\left(\left\vert \log(\delta) \right\vert^{t+h-1}  \right)$ we obtain: 
\begin{equation*}
\left\vert u^{s,N}(x,\theta) - u^{s,N-1}(x,\theta) \right\vert = \mathcal{O}\left(\left\vert \log(\delta) \right\vert^{(h-1)-N(1-h-t)} \right).
\end{equation*}
This concludes the proof of Corollary \ref{coro15}.


\section{Appendix}\label{appendixproposition}
\subsection{Estimation of the scattering coefficient $C_{m}$ and $\left\Vert w \right\Vert_{\mathbb{L}^{2}(D_{m})}$}
\
\\
We recall that: 
\begin{equation*}
C_{m} := \int_{D_{m}} w(x) \, dx,
\end{equation*}
where $w$ is solution, in the domain $D$, of
\begin{equation}\label{EquaW}
\left[ A^{0} - \left( \lambda_{n_{0}} + \mathcal{O}\left( \delta^{2+h} \right) \right)  \right] w = 1 . 
\end{equation}
We can write $C_{m}$, by spectral decomposition of $w$, as:
\begin{equation}\label{wserie}
C_{m} := \int_{D_{m}} w(x) \, dx = \sum_{n=1}^{+\infty} \langle w; e_{n} \rangle_{\mathbb{L}^{2}(D_{m})} \; \int_{D_{m}} e_{n}(x) \; dx.
\end{equation} 
In addition, we have
\begin{eqnarray*}
\int_{D_{m}} e_{n}(x) \, dx  =  \int_{D_{m}} 1 \; e_{n}(x) \, dx & \overset{(\ref{EquaW})}{=} & \int_{D_{m}} \left[ A^{0} - \left( \lambda_{n_{0}} + \mathcal{O}\left( \delta^{2+h} \right) \right)  \right](w)(x) \; e_{n}(x) \; dx \\ 
& = & \langle \left[ A^{0} - \left( \lambda_{n_{0}} + \mathcal{O}\left(\delta^{2+h}\right) \right)  \right] w ; e_{n} \rangle_{\mathbb{L}^{2}(D_{m})} \\ 
& = & \langle A^{0} w ; e_{n} \rangle_{\mathbb{L}^{2}(D_{m})} -  \left( \lambda_{n_{0}} + \mathcal{O}\left(\delta^{2+h}\right) \right) \,  \langle  w ; e_{n} \rangle_{\mathbb{L}^{2}(D_{m})}.
\end{eqnarray*}
Using the fact that $A^{0}$ is a self adjoint operator with $\left(\lambda_{n}; e_{n} \right)$ as an eigen-system we derive, from the previous equation, the coming relation linking the Fourier coefficient of $w$ with the Fourier coefficient of function unity $1$:
\begin{equation}\label{coeffw}
\langle w ; e_{n} \rangle_{\mathbb{L}^{2}(D_{m})} = \frac{\langle 1 ; e_{n} \rangle_{\mathbb{L}^{2}(D_{m})}}{\left( \lambda_{n} - \left( \lambda_{n_{0}} + \mathcal{O}\left( \delta^{2+h} \right)\right)\right)}.
\end{equation}
Then, by combining $(\ref{wserie})$ with $(\ref{coeffw})$, we obtain 
\begin{equation}\label{RMSeries}
C_{m} = \sum_{n=1}^{+\infty} \frac{\left( \langle 1 ; e_{n} \rangle_{\mathbb{L}^{2}(D_{m})} \right)^{2}}{\left( \lambda_{n} - \left( \lambda_{n_{0}} + \mathcal{O}\left( \delta^{2+h} \right)\right)\right)}.
\end{equation} 
To see the behaviour of $\langle 1 ; e_{n} \rangle_{\mathbb{L}^{2}(D_{m})}$, with respect to the radius of $D_{m}$, i.e. $\delta$, we start by scaling: 
\begin{equation*}
\int_{D_{m}} e_{n}(x) \; dx  = \delta^{3} \; \int_{B} e_{n}(z_{m} + \delta \, \xi) \; d\xi = \delta^{3} \; \int_{B_{m}} \tilde{e}_{n}(\xi) \; d\xi.
\end{equation*}
We define the following orthonormal basis: 
\begin{equation*}
\overline{e}_{n} = \frac{\tilde{e}_{n}}{\Vert \tilde{e}_{n} \Vert_{\mathbb{L}^{2}(B)}},
\end{equation*}
where, in effortless manner, we can prove that
$\Vert \tilde{e}_{n} \Vert_{\mathbb{L}^{2}(B)} = \delta^{-\frac{3}{2}}$.
Synthesizing what precedes we deduce: 
\begin{equation}\label{scaleeigfct}
\int_{D_{m}} e_{n}(x) \; dx  = \delta^{\frac{3}{2}} \; \int_{B} \overline{e}_{n}(\xi) \; d\xi.
\end{equation}
Using the derived scales, the equation $(
\ref{RMSeries})$ becomes  
\begin{equation*}
C_{m} = \delta^{3}   \sum_{n \geq 1} \frac{\left( \langle 1; \overline{e}_{n} \rangle_{\mathbb{L}^{2}(B)} \right)^{2}}{\lambda_{n} - (\lambda_{n_{0}} + \mathcal{O}(\delta^{2+h}))},
\end{equation*}
and, by scaling the eigenvalues $\lambda_{n} = \delta^{2} \, \tilde{\lambda}_{n}$ and $\lambda_{n_{0}} = \delta^{2} \, \tilde{\lambda}_{n_{0}}$, we obtain 
\begin{equation}\label{intw}
C_{m}  =  \delta \, \sum_{n \geq 1} \frac{\left( \langle 1; \overline{e}_{n} \rangle_{\mathbb{L}^{2}(B)} \right)^{2}}{\tilde{\lambda}_{n} - (\tilde{\lambda}_{n_{0}} + \mathcal{O}(\delta^{h}))}  = \delta^{1-h} \left( \langle 1 ; \overline{e}_{n_{0}} \rangle_{\mathbb{L}^{2}(B)} \right)^{2} + \delta \sum_{n \neq n_{0}} \frac{\left( \langle 1; \overline{e}_{n} \rangle_{\mathbb{L}^{2}(B)} \right)^{2}}{\tilde{\lambda}_{n} - \left( \tilde{\lambda}_{n_{0}} + \mathcal{O}\left( \delta^{h} \right)\right)}. 
\end{equation}
The preceding formula is consistent if
\begin{equation*}
\sum_{n} \left( \langle 1; \overline{e}_{n} \rangle_{\mathbb{L}^{2}(B)} \right)^{2} \; < \infty.
\end{equation*}
Now, we need to show the convergence of the previous series. For this, let $\beta$ such that
\begin{equation*}
\beta - \underset{n}{\max} \,\, \lambda_{n} >> 1 
\end{equation*}
and $w_{\beta}$ solution of 
\begin{equation}\label{wbeta}
\left( A^{0} - \beta \right) \, w_{\beta} = 1. 
\end{equation}
We have: 
\begin{equation*}
\left( \langle 1; \overline{e}_{n} \rangle_{\mathbb{L}^{2}(B)} \right)^{2} = \left( \beta - \lambda_{n} \right) \; \frac{\left( \langle 1; \overline{e}_{n} \rangle_{\mathbb{L}^{2}(B)} \right)^{2}}{\left( \beta - \lambda_{n} \right)} \leq  \beta \; \frac{\left( \langle 1; \overline{e}_{n} \rangle_{\mathbb{L}^{2}(B)} \right)^{2}}{\left( \beta - \lambda_{n} \right)}.
\end{equation*}
Hence, after taking the series with respect to $n$ on both sides, we get 
\begin{equation*}
\sum_{n} \left( \langle 1; \overline{e}_{n} \rangle_{\mathbb{L}^{2}(B)} \right)^{2} \, \leq  \, \beta \; \sum_{n} \frac{\left( \langle 1; \overline{e}_{n} \rangle_{\mathbb{L}^{2}(B)} \right)^{2}}{\left( \beta - \lambda_{n} \right)}.
\end{equation*}
In addition, using  $(\ref{wbeta})$, we can derive that 
\begin{equation*}
\sum_{n} \frac{\left( \langle 1; \overline{e}_{n} \rangle_{\mathbb{L}^{2}(B)} \right)^{2}}{\left( \beta - \lambda_{n} \right)} = - \int_{D} w_{\beta}(x) \; dx < \infty.
\end{equation*}
Which proves that $(\ref{intw})$ is well defined. Finally, we deduce that 
\begin{equation}\label{C=intW}
C_{m} := \int_{D_{m}} w(x) \, dx = \mathcal{O}\left( \delta^{1-h} \right). 
\end{equation}
To estimate the $\mathbb{L}^{2}-$norm of $w$, we start by writing its series expansion given by 
\begin{equation*}
\left\Vert w \right\Vert^{2}_{\mathbb{L}^{2}(D_{m})} = \sum_{n=1}^{+\infty} \left\vert \langle w; e_{n} \rangle_{\mathbb{L}^{2}(D_{m})} \right\vert^{2} \overset{(\ref{coeffw})}{=} \sum_{n} \frac{\left\vert \langle 1 ; e_{n} \rangle_{\mathbb{L}^{2}(D_{m})} \right\vert^{2}}{\left\vert \left( \lambda_{n} - \left( \lambda_{n_{0}} + \mathcal{O}\left( \delta^{2+h} \right)\right)\right) \right\vert^{2}}.
\end{equation*} 
We use the scale of the eigenvalues $\lambda_{n}$, which is of order $\delta^{2}$, and the scale of the eigenfunction integral, given by $(\ref{scaleeigfct})$, to obtain: 
\begin{equation*}
\left\Vert w \right\Vert^{2}_{\mathbb{L}^{2}(D_{m})} = \delta^{-1} \; \sum_{n} \frac{\left\vert \langle 1 ; \overline{e}_{n} \rangle_{\mathbb{L}^{2}(B)} \right\vert^{2}}{\left\vert \left( \tilde{\lambda}_{n} - \left( \tilde{\lambda}_{n_{0}} + \mathcal{O}\left( \delta^{h} \right)\right)\right) \right\vert^{2}} = \mathcal{O}\left( \delta^{-1-2h} \right).
\end{equation*} 
Finally, we deduce that: 
\begin{equation}\label{estimationnormw}
\left\Vert w \right\Vert_{\mathbb{L}^{2}(D_{m})} =  \mathcal{O}\left( \delta^{-\frac{1}{2}-h} \right).
\end{equation}
\subsection{Proof of Proposition $\ref{aprioriestimateLemma}$}\label{AppendixSubSecII}
\
\newline
We start by setting the L.S.E, in presence of several particles, given by
\begin{eqnarray*}
v_{i}(x) &-& k^{2} \, \frac{1}{a_{0}} \, \int_{D_{i}} \Phi_{k}(x,y) \, (b - b_{0}(y)) \, v_{i}(y) \, dy \\ &-& k^{2} \, \frac{1}{a_{0}} \, \sum_{m \neq i} \int_{D_{m}} \Phi_{k}(x,y) \, (b - b_{0}(y)) \, v_{m}(y) \, dy = u(x).
\end{eqnarray*}
By expanding the function $b_{0}(\cdot)$, near the centers $z_{i}, i=1,\cdots,M$, and rearranging the equation we obtain 
\begin{eqnarray*}
v_{i}(x) &-& k^{2} \, \mu_{0} \, (b - b_{0}(z_{i})) \, \int_{D_{i}} \Phi_{0}(x,y) \,  v_{i}(y) \, dy = u(x) \\ &-& k^{2} \, \frac{1}{a_{0}} \, \int_{D_{i}} \Phi_{0}(x,y) \int_{0}^{1}  \nabla b_{0}(z_{i}+t(y-z_{i})) \cdot (y-z_{i}) \, dt \, v_{i}(y) \, dy \\ &+& k^{2} \, \frac{1}{a_{0}} \, \int_{D_{i}} \left( \Phi_{k}-\Phi_{0} \right)(x,y) \, (b - b_{0}(y)) \, v_{i}(y) \, dy \\ &+& k^{2} \,  \frac{1}{a_{0}} \, \sum_{m \neq i} \int_{D_{m}} \Phi_{k}(x,y) \, (b - b_{0}(y)) \, v_{m}(y) \, dy.
\end{eqnarray*}
After scaling the last equation, we get 
\begin{eqnarray*}
&& \left( I \, - \, k^{2} \, \mu_{0} \, (b - b_{0}(z_{i})) \, \delta^{2} \, A^{0} \right) \tilde{v}_{i}(\eta) 
 =  \tilde{u}_{i}(\eta) \\ 
& - & k^{2} \, \frac{1}{a_{0}} \, \delta^{3} \, \int_{B} \Phi_{0}(\eta,\xi) \, \int_{0}^{1} \widetilde{\nabla b}_{0}\left( z_{i} + t \, \delta \, \xi \right) \cdot \xi \, dt \, \tilde{v}_{i}(\xi) \, d\xi \\
& + & k^{2} \, \frac{1}{a_{0}} \, \delta^{3} \, \int_{B} \left( \tilde{\Phi}_{k}-\tilde{\Phi}_{0} \right)(\eta,\xi) \, (b - \tilde{b_{0}}(\xi)) \, \tilde{v}_{i}(\xi) \, d\xi \\ 
& + & k^{2} \, \frac{1}{a_{0}} \, \delta^{3} \, \sum_{m \neq i} \int_{B} \tilde{\Phi}_{k}(\eta,\xi) \, (b - \tilde{b_{0}}(\xi)) \, \tilde{v}_{m}(\xi) \, d\xi,    
\end{eqnarray*}
where we recall that 
\begin{equation*}
A^{0}(f)(x) := \int_{B} \Phi_{0}(x,y) \, f(y) \, dy.
\end{equation*}
Then 
\begin{eqnarray*}
\tilde{v}_{i} & = & \lambda_{n_{0}} \, \mathcal{R}\left( \lambda_{n_{0}}+ \delta^{h};A^{0}\right) \Bigg[ \tilde{u}_{i}(\cdot) \\ 
& - & k^{2} \, \frac{1}{a_{0}} \, \delta^{3} \, \int_{B} \Phi_{0}(\cdot ,\xi) \, \int_{0}^{1} \widetilde{ \nabla b}_{0}\left( z_{i} + t \, \delta \, \xi \right) \cdot \xi \, dt \, \tilde{v}_{i}(\xi) \, d\xi \\
& + & k^{2} \, \frac{1}{a_{0}} \, \delta^{3} \, \int_{B} \left( \tilde{\Phi}_{k}-\tilde{\Phi}_{0} \right)(\cdot ,\xi) \, (b -\tilde{b_{0}}(\xi)) \, \tilde{v}_{i}(\xi) \, d\xi \\ 
& + & k^{2} \, \frac{1}{a_{0}} \, \delta^{3} \, \sum_{m \neq i} \int_{B} \tilde{\Phi}_{k}(\cdot ,\xi) \, (b - \tilde{b_{0}}(\xi)) \, \tilde{v}_{m}(\xi) \, d\xi \Bigg].  
\end{eqnarray*}
Now, by taking the $\Vert \cdot \Vert_{\mathbb{L}^{2}(B)}$ in both sides of the last equality, we deduce 
\begin{eqnarray*}
\Vert \tilde{v}_{i} \Vert_{\mathbb{L}^{2}(B)} & \leq & \delta^{-h} \Vert \tilde{u}_{i} \Vert_{\mathbb{L}^{2}(B)} \\ 
& + &  \delta^{3} \; \delta^{-h}  \left\Vert \int_{B} (\tilde{\Phi}_{k}-\tilde{\Phi}_{0})(\cdot ,\xi) \, (b - \tilde{b_{0}}(\xi)) \, \tilde{v}_{i}(\xi) \, d\xi \right\Vert_{\mathbb{L}^{2}(B)} \\ 
& + &  \delta^{3} \; \delta^{-h} \;  \sum_{m \neq i} \left\Vert \int_{B} \tilde{\Phi}_{k}(\cdot ,\xi) \, (b - \tilde{b_{0}}(\xi)) \, \tilde{v}_{m}(\xi) \, d\xi \right\Vert_{\mathbb{L}^{2}(B)} \\ 
& + &  \delta^{3} \; \delta^{-h} \left\Vert \int_{B} \Phi_{0}(\cdot , \xi) \int_{0}^{1} \widetilde{\nabla b}_{0}(z_{i}+t \, \delta \, \xi) \cdot \xi \, dt \, \tilde{v}_{i}(\xi) \, d\xi \right\Vert_{\mathbb{L}^{2}(B)},  
\end{eqnarray*}
hence,  
\begin{equation*}
\left\Vert \tilde{v}_{i} \right\Vert_{\mathbb{L}^{2}(B)}  \leq  \delta^{-h} \, \left\Vert \tilde{u}_{i} \right\Vert_{\mathbb{L}^{2}(B)} +  \delta^{1-h} \,  \left\Vert \tilde{v}_{i} \right\Vert_{\mathbb{L}^{2}(B)} +  \delta^{1-h} \,  \sum_{m \neq i} \left\Vert \tilde{v}_{m} \right\Vert_{\mathbb{L}^{2}(B)}  +  \delta^{3-h} \,  \left\Vert  \tilde{v}_{i} \right\Vert_{\mathbb{L}^{2}(B)},
\end{equation*}
which can be reduced to
\begin{eqnarray*}
\left\Vert \tilde{v}_{i} \right\Vert_{\mathbb{L}^{2}(B)} & \leq &  \delta^{-h} \, \left\Vert \tilde{u}_{i} \right\Vert_{\mathbb{L}^{2}(B)} +  \delta^{1-h} \,  \left\Vert \tilde{v}_{i} \right\Vert_{\mathbb{L}^{2}(B)} +  \delta^{1-h} \,  \sum_{m \neq i} \left\Vert \tilde{v}_{m} \right\Vert_{\mathbb{L}^{2}(B)} \\
\left\Vert \tilde{v}_{i} \right\Vert_{\mathbb{L}^{2}(B)} & \leq & \left( 1-\delta^{1-h} \right)^{-1} \; \delta^{-h} \, \left\Vert \tilde{u}_{i} \right\Vert_{\mathbb{L}^{2}(B)} + \left( 1 - \delta^{1-h} \right)^{-1} \, \delta^{1-h} \;  \sum_{m \neq i} \left\Vert \tilde{v}_{m} \right\Vert_{\mathbb{L}^{2}(B)} \\ 
& \lesssim &  \delta^{-h} \, \left\Vert \tilde{u}_{i} \right\Vert_{\mathbb{L}^{2}(B)} + \delta^{1-h} \;  \sum_{m \neq i} \left\Vert \tilde{v}_{m} \right\Vert_{\mathbb{L}^{2}(B)}.
\end{eqnarray*}
Scaling back we obtain 
\begin{equation*}
\left\Vert v_{i} \right\Vert_{\mathbb{L}^{2}(D_{i})} \leq \delta^{-h} \left\Vert u_{i} \right\Vert_{\mathbb{L}^{2}(D_{i})} +  \delta^{1-h} \;  \sum_{m \neq i} \left\Vert v_{m} \right\Vert_{\mathbb{L}^{2}(D_{m})},
\end{equation*}
and 
\begin{equation*}
\left\Vert v_{i} \right\Vert_{\mathbb{L}^{2}(D_{i})} \leq \delta^{-h} \left\Vert u_{i} \right\Vert_{\mathbb{L}^{2}(D_{i})} +  \delta^{1-h} \;  \sum_{m=1}^{M} \left\Vert v_{m} \right\Vert_{\mathbb{L}^{2}(D_{m})}.
\end{equation*}
By taking the modulus squared in both sides we obtain 
\begin{equation*}
\left\Vert v_{i} \right\Vert^{2}_{\mathbb{L}^{2}(D_{i})} \lesssim \delta^{-2h} \, \left\Vert u_{i} \right\Vert^{2}_{\mathbb{L}^{2}(D_{i})} +  \delta^{2(1-h)} \; \sum_{m=1}^{M} \left\Vert v_{m} \right\Vert^{2}_{\mathbb{L}^{2}(D_{m})}.
\end{equation*}
Summing up with respect to the index $i$, we get  
\begin{equation}\label{CCM}
\sum_{i=1}^{M} \, \left\Vert v_{i} \right\Vert^{2}_{\mathbb{L}^{2}(D_{i})}  \lesssim  \delta^{-2h} \,\sum_{i=1}^{M} \,   \left\Vert u_{i} \right\Vert^{2}_{\mathbb{L}^{2}(D_{i})} +  \delta^{2(1-h)} \;  \sum_{m=1}^{M} \left\Vert v_{m} \right\Vert^{2}_{\mathbb{L}^{2}(D_{m})}.
\end{equation}
We deduce, under the condition $h<1$, that:
\begin{eqnarray*}
\sum_{i=1}^{M} \, \left\Vert v_{i} \right\Vert^{2}_{\mathbb{L}^{2}(D_{i})}  & \leq & \left( 1 - \delta^{2(1-h)} \right)^{-1} \;\; \delta^{-2h} \,\sum_{i=1}^{M} \,   \left\Vert u_{i} \right\Vert^{2}_{\mathbb{L}^{2}(D_{i})}.
\end{eqnarray*}
This allows us, under the condition $h <1$, to deduce that
\begin{equation}
\left\Vert v \right\Vert_{\mathbb{L}^{2}(D)} \;  \leq   \delta^{-h} \; \left\Vert u \right\Vert_{\mathbb{L}^{2}(D)}.
\end{equation}

\end{document}